\UseRawInputEncoding
\documentclass[12pt]{amsart}
\usepackage{epsfig, color, amsmath, esint, hyperref, mathrsfs, xcolor, bm, enumitem, mathtools, comment, amsfonts, amssymb, graphicx, psfrag}

\headheight=6.15pt \textheight=8.75in \textwidth=6.5in
\oddsidemargin=0in \evensidemargin=0in \topmargin=0in

\hypersetup{colorlinks}
\hypersetup{citecolor=blue}
\hypersetup{urlcolor=blue}
\makeatother

\theoremstyle{definition}
\def\fnum{equation} 
\newtheorem{Thm}[\fnum]{Theorem}

\newtheorem{Lem}[\fnum]{Lemma}
\newtheorem{Con}[\fnum]{Conjecture}
\newtheorem{Exa}[\fnum]{Example}
\newtheorem{Rem}[\fnum]{Remark}
\newtheorem{Pro}[\fnum]{Proposition}
\newtheorem{Def}[\fnum]{Definition}
\renewcommand{\rm}{\normalshape} 
\newcommand{\diam}{{\text {diam}}}

\newcommand{\iso}{\text{Iso}}
\newcommand{\vol}{\text{vol}}
\newcommand{\arcct}{\text{arccot}}
\newcommand{\vv}{\mathrm{v}}

\newcommand{\Ric}{\text{Ric}}

\newcommand{\kker}{\text{Ker}}

\title{Torus covers with controlled volume and diameter}
\author{Sergio Zamora}\address{Oregon State University}

\begin{document}

\begin{abstract}
    We show that under a lower Ricci curvature bound and an upper diameter bound, a torus admits a finite-sheeted covering space with volume bounded from below and diameter bounded from above. This partially recovers a result of Kloeckner and Sabourau, whose original proof contains a serious gap that currently lacks a resolution. 
\end{abstract}

\maketitle
	
	\section{Introduction}
        Riemannian manifolds with sectional and Ricci curvature lower bounds satisfy multiple analytic, geometric, and topological properties. A common technique to exploit these properties is to study the corresponding universal covers or other suitable covering spaces (see for example \cite{ fukaya-yamaguchi, kpt, kw, naber-zhang}). This technique is particularly useful when these covering spaces are non-collapsed; that is, the volumes of their unit balls have a positive lower bound (see for example \cite{hkrw, huang,  mrw, pan-wang}). The main result of this paper establishes that tori always have non-collapsed covers of controlled diameter.

	\begin{Thm}\label{thm:torus}
		For each $n \in \mathbb{N}$,  $D > 0 $, there are $\varepsilon (n) > 0 $ and $D'(n, D)  > 0$ such that if $M$ is a Riemannian manifold homeomorphic to the $n$-dimensional torus and satisfies  
		\[ \Ric (M) \geq - (n-1),  \hspace{2cm}  \diam (M) \leq D,\] 
		then it admits a covering space $M ' \to M$ that when equipped with the lifted metric satisfies  
		\[     \vol (M') \geq \varepsilon , \hspace{2cm} \diam (M') \leq D' .       \]
	\end{Thm}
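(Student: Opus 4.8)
The plan is to produce $M'$ as a quotient $\tilde M/\Gamma'$ of the universal cover $\pi\colon\tilde M\to M$ by a suitable finite–index subgroup $\Gamma'$ of the deck group $\Gamma:=\pi_1(M)\cong\mathbb Z^n$, and to reduce the two required bounds to lattice geometry. Fix $p\in M$, $\tilde p\in\pi^{-1}(p)$, and consider two length functions on $\Gamma$: the displacement $\|\gamma\|_o:=d_{\tilde M}(\tilde p,\gamma\tilde p)$ and the stable norm $\|\cdot\|_s$ on $H_1(M;\mathbb R)\cong\mathbb R^n$; subadditivity of $\gamma\mapsto\|\gamma\|_o$ along powers gives $\|\cdot\|_s\le\|\cdot\|_o$. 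I would first record two elementary facts. Since $\diam(M)\le D$, every point of $\tilde M$ lies within distance $D$ of the orbit $\Gamma\tilde p$; from this one gets $\diam(\tilde M/\Gamma')\le 2D+2\,\mathrm{cr}_o(\Gamma')$, where $\mathrm{cr}_o(\Gamma')$ is the covering radius of $\Gamma'$ in $(\Gamma,\|\cdot\|_o)$, i.e.\ the least $R$ with every coset of $\Gamma'$ meeting $\{\gamma:\|\gamma\|_o\le R\}$. Also $\vol(\tilde M/\Gamma')=[\Gamma:\Gamma']\cdot\vol(M)$. Thus it suffices to produce $\Gamma'$ with $[\Gamma:\Gamma']\cdot\vol(M)\ge\varepsilon(n)$ and $\mathrm{cr}_o(\Gamma')\le R(n,D)$.

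For the volume I would use a stable systolic inequality for the torus (due to Gromov): if $0<\lambda_1\le\dots\le\lambda_n$ are the successive minima of $\|\cdot\|_s$ with respect to $\Gamma$, then $\lambda_1\cdots\lambda_n\le C_n\,\vol(M)$; one route is that the Albanese map $M\to H_1(M;\mathbb R)/H_1(M;\mathbb Z)$, metrized by the stable norm, has degree one and volume–nonincreasing behaviour, so $\vol(M)$ dominates the volume of the stable torus, which by Minkowski's second theorem is comparable to $\prod_j\lambda_j$. Now choose a basis $\sigma_1,\dots,\sigma_n$ of $\Gamma$ with $\|\sigma_j\|_s\le c_n\lambda_j$ (reduction theory), set $a_j:=\lceil 1/\lambda_j\rceil$, and put $\Gamma':=\langle\sigma_1^{a_1},\dots,\sigma_n^{a_n}\rangle$, of index $\prod_j a_j\ge\prod_j\lambda_j^{-1}$. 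Then $\vol(M')=\bigl(\prod_j a_j\bigr)\vol(M)\ge\vol(M)/\prod_j\lambda_j\ge C_n^{-1}=:\varepsilon(n)$, depending only on $n$. I would also note that $\lambda_n\le C_1(n,D)$: since $\Gamma$ is generated by loops based at $p$ of length $\le 2D$ (a standard fact for closed manifolds), $\Gamma$ contains $n$ independent classes of $\|\cdot\|_o$–norm, hence $\|\cdot\|_s$–norm, at most $2D$.

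The diameter estimate is where the real input lies. Here I would invoke a uniform version of Burago's theorem on periodic metrics: there is $C_0=C_0(n,D)$ with $\|\gamma\|_o\le\|\gamma\|_s+C_0$ for every $\gamma\in\Gamma$ and every $M$ in the class. Granting this, write a general $\gamma\in\Gamma$ as $\gamma=\sigma_1^{m_1}\cdots\sigma_n^{m_n}$ and reduce each $m_j$ modulo $a_j$ into $(-a_j/2,a_j/2]$; the resulting representative $g$ of $\gamma\Gamma'$ satisfies $\|g\|_o\le\sum_j\|\sigma_j^{m_j}\|_o\le\sum_j\bigl(|m_j|\,\|\sigma_j\|_s+C_0\bigr)\le\sum_j\bigl(\tfrac{a_j}{2}c_n\lambda_j+C_0\bigr)$. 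Since $a_j\lambda_j=\lceil1/\lambda_j\rceil\lambda_j\le1+\lambda_j\le1+C_1(n,D)$, this yields $\mathrm{cr}_o(\Gamma')\le n\bigl(\tfrac{c_n}{2}(1+C_1(n,D))+C_0(n,D)\bigr)=:R(n,D)$, and hence $\diam(M')\le 2D+2R(n,D)=:D'(n,D)$, completing the proof.

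The main obstacle is precisely the uniform Burago estimate. For a fixed periodic metric one has $\|\gamma\|_o\le\|\gamma\|_s+C(M)$ by Burago's theorem; what is needed is that $C(M)$ can be taken to depend only on $n$ and $D$ over the class $\{\,M\approx T^n:\Ric(M)\ge-(n-1),\ \diam(M)\le D\,\}$, and this is the one place the Ricci bound is genuinely used. The class is precompact in the Gromov–Hausdorff topology, so a failure of uniformity would give metrics $M_i$ and classes $\gamma_i$ with $\|\gamma_i\|_o-\|\gamma_i\|_s\to\infty$; passing to an equivariant Gromov–Hausdorff limit $(\tilde M_i,\tilde p_i,\Gamma_i)\to(Y,\tilde p,G)$ (Fukaya–Yamaguchi) one seeks a contradiction, treating separately the case $\|\gamma_i\|_s$ bounded (where $\|\gamma_i\|_o$ converges, hence stays bounded) and the case $\|\gamma_i\|_s\to\infty$ (where one rescales and uses that the stable norm is exact on the asymptotic cone). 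Controlling the length–minimizing representatives and the stable norms through this limiting process — in particular understanding the limit group $G\subseteq\iso(Y)$ and how covolume and displacement behave in the limit — is the heart of the argument and the step I expect to be hardest.
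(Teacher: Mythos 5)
Your approach is genuinely different from the paper's, and it has a real gap at its heart. The paper deduces Theorem \ref{thm:torus} from two inputs: Guth's non-collapsing theorem for universal covers of aspherical manifolds (to handle the volume lower bound), and Theorem \ref{thm:separated-subgroup}, which produces a finite-index subgroup $\Gamma\leq\pi_1(M)$ with displacement $\geq D$ at a basepoint and $\diam(\tilde M/\Gamma)\leq D'(n,D)$. The displacement bound means a fixed ball injects into $M'=\tilde M/\Gamma$, and Guth then gives the volume bound. Your volume argument instead goes through the stable systolic inequality and an index count $\vol(M')=[\Gamma:\Gamma']\vol(M)$; that piece is plausibly sound (modulo a careful citation of the Gromov/Burago--Ivanov form $\lambda_1\cdots\lambda_n\leq C_n\vol(M)$) and is a reasonable alternative to Guth. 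But it then transfers the entire diameter problem to a \emph{uniform Burago estimate} $\|\gamma\|_o\leq\|\gamma\|_s+C_0(n,D)$ over the whole class, and this is not an auxiliary lemma you can assume — it is essentially the content of the theorem, and precisely the step where the Kloeckner--Sabourau argument that this paper is correcting goes wrong.

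Concretely: the uniform Burago bound is what Cerocchi--Sambusetti's theorem (stated in Section \ref{sec:lit}) gives, but they need an a priori bound on the asymptotic volume $\omega(G,d)$ in addition to $n$ and $D$. Under only $\Ric\geq-(n-1)$ and $\diam\leq D$, $\omega(G,d)$ is \emph{not} controlled: Bishop--Gromov bounds $\vol(B_r(p))$ in $\tilde M$ but the lattice-point count is roughly $\vol(B_r)/\vol(M)$, and $\vol(M)$ can be arbitrarily small. Your sketch of a contradiction argument via equivariant Gromov--Hausdorff limits also doesn't close the gap: in the ``$\|\gamma_i\|_s$ bounded'' branch you assert that $\|\gamma_i\|_o$ converges and hence stays bounded, but that is exactly what fails — the elements $\gamma_i$ can escape to infinity equivariantly even when their stable norms are bounded, and this is what happens when $\diam(X_i/\Gamma_i)\to\infty$. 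The paper's resolution is structurally different: instead of trying to show the orbit metric on $\Gamma_i$ stays uniformly close to the stable norm, it shows (Theorem \ref{thm:main-technical}) that if the quotient diameters blow up, the equivariant limit group contains a lattice $\mathbb Z^N$ with $N>n$, and then a monodromy-type covering-space construction (Theorem \ref{thm:monodromy}) manufactures a non-trivial cover of the simply connected $X_i$, a contradiction. That mechanism — deriving a topological contradiction from the over-large limit group — is the paper's key new idea and has no counterpart in your outline. As written, your argument reduces the theorem to an unproved statement that is at least as hard as the theorem itself.
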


	\subsection{Torus stability} The main motivation for Theorem \ref{thm:torus} is  to correct an error in the literature regarding 
    the following result  \cite[Theorem 1.1]{brue-naber-semola}. 
	\begin{Thm}[Bru\'e--Naber--Semola]\label{thm:bns}
		Let $M_i$ be a sequence of Riemannian manifolds homeomorphic to the $n$-dimensional torus and satisfying
		\[ \sec (M_i) \geq -1,  \hspace{2cm}  \diam (M_i) \leq D.\]
        If the sequence $M_i$ converges in the Gromov--Hausdorff sense to a metric space $X$, then  $X$ is homeomorphic to the $m$-dimensional torus for some $m \in \{ 0 , \ldots , n \}$.  
	\end{Thm}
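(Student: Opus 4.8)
The plan is to feed Theorem~\ref{thm:torus} into the argument of Bru\'e--Naber--Semola: the one step in their proof that lacked justification was exactly the existence of non-collapsed covers of controlled diameter, and once that is in hand the remaining reductions go through. Since $\sec(M_i) \geq -1$ gives $\Ric(M_i) \geq -(n-1)$, Theorem~\ref{thm:torus} produces covering spaces $p_i \colon M_i' \to M_i$ with $\vol(M_i') \geq \varepsilon(n)$ and $\diam(M_i') \leq D'(n,D)$. A complete Riemannian manifold of bounded diameter is compact by Hopf--Rinow, so $p_i$ is a finite covering; since $\pi_1(M_i) \cong \mathbb{Z}^n$ is abelian this covering is regular, its deck group $\Gamma_i$ is finite abelian, and $M_i'$ is again homeomorphic to $T^n$ (a finite cover of a torus is a torus). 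The lifted metric satisfies $\sec(M_i') \geq -1$ and $\diam(M_i') \leq D'$, so after passing to a subsequence $M_i' \to X'$ in the Gromov--Hausdorff sense, with $X'$ a compact Alexandrov space of curvature $\geq -1$; the uniform volume lower bound, together with the Bishop--Gromov upper bound, makes the convergence non-collapsed, so $\dim X' = n$. Perelman's stability theorem then gives that $M_i'$ is homeomorphic to $X'$ for all large $i$, hence $X'$ is homeomorphic to $T^n$.

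Passing to a further subsequence, the isometric actions $\Gamma_i \curvearrowright M_i'$ converge, in the equivariant Gromov--Hausdorff sense (Fukaya--Yamaguchi), to an isometric action $\Gamma \curvearrowright X'$ with $X = \lim_i M_i = \lim_i M_i'/\Gamma_i = X'/\Gamma$. The limit $\Gamma$ is a closed, hence compact, subgroup of $\iso(X')$; since $\iso(X')$ is a Lie group (Montgomery--Zippin) and $\Gamma$ is a limit of abelian groups, $\Gamma$ is a compact abelian Lie group. Write $k = \dim\Gamma$, so $\dim X = n-k$, and $\Gamma^0 \cong T^k$.

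It remains to identify $X'/\Gamma$. The connected part $\Gamma^0$ acts on $X' \cong T^n$ trivially on $\pi_1$, and the finite part $\Gamma/\Gamma^0$ inherits triviality on $H_1$ from the $\Gamma_i$, each of which acts trivially on $H_1(M_i';\mathbb{Z})$ because $\pi_1(M_i)$ is abelian. By the theory of transformation groups on aspherical manifolds (Conner--Raymond), a compact abelian Lie group acting effectively on $T^n$ and trivially on $\pi_1$ acts freely, so $X' \to X$ is a principal $\Gamma$-bundle and $X$ is a closed manifold; the homotopy exact sequence of $\Gamma \to X' \to X$, together with injectivity of the orbit map $\pi_1(\Gamma^0) \to \pi_1(X')$, gives $\pi_2(X) = 0$, so $X$ is aspherical, and $\pi_1(X)$ is a torsion-free extension of the finite group $\pi_0(\Gamma)$ by $\mathbb{Z}^{n-k}$ on which $\pi_0(\Gamma)$ acts trivially. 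A torsion-free group with a central free-abelian subgroup of finite index is itself free abelian, so $\pi_1(X) \cong \mathbb{Z}^{n-k}$, and by the topological rigidity of tori (a closed aspherical manifold with fundamental group $\mathbb{Z}^m$ is homeomorphic to $T^m$) we conclude $X \cong T^{n-k}$ with $n-k \in \{0,\dots,n\}$, as desired.

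The delicate point — and the one place where the geometry furnished by Theorem~\ref{thm:torus} is essential rather than merely convenient — is the transition from the free deck actions $\Gamma_i \curvearrowright M_i'$ to the limiting action $\Gamma \curvearrowright X'$: one must know that in the limit the action remains free and homologically trivial and does not degenerate, and it is precisely the simultaneous diameter, volume, and curvature control of the covers (entering, e.g., through an equivariant refinement of the stability theorem) that forbids such degeneration.
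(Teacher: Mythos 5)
Your proposal is correct and takes essentially the same approach as the paper: the paper's entire proof of Theorem \ref{thm:bns} is the observation that Bru\'e--Naber--Semola invoked the flawed Kloeckner--Sabourau result only to obtain the statement of Theorem \ref{thm:torus}, so once Theorem \ref{thm:torus} is established (and $\sec \geq -1$ gives $\Ric \geq -(n-1)$), their published argument goes through unchanged. Your further sketch of that argument (non-collapsed limits of the covers, Perelman stability, equivariant limits of the deck actions, and the transformation-group analysis of the quotient) is a reconstruction of the cited proof rather than anything the paper re-proves, and it is consistent with it.
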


         The proof of Theorem \ref{thm:bns} in \cite{brue-naber-semola}  relies on \cite[Theorem 4.2]{kloeckner-sabourau}, but the proof of \cite[Theorem 4.2]{kloeckner-sabourau} is incorrect and it is unclear whether it can be repaired.    In \cite{brue-naber-semola}, the authors only used \cite[Theorem 4.2]{kloeckner-sabourau} to deduce the statement of Theorem \ref{thm:torus}.  With Theorem \ref{thm:torus} established, the proof of Theorem \ref{thm:bns} given in \cite{brue-naber-semola} now succeeds.

           We now outline the gap in the proof of \cite[Theorem 4.2]{kloeckner-sabourau}\footnote{This was originally pointed out by Cameron Rudd.}
           .   The main tool that is used in said proof is \cite[Proposition 3.4]{kloeckner-sabourau},  which in turn uses the construction  \cite[Definition 2.5]{kloeckner-sabourau}. Given $\ell \in \mathbb{N}$ and a primitive cohomology class $\lambda \in H^1 (M ; \mathbb{Z})$ in a torus $M$, a finite-sheeted cover $\hat{M}_{\lambda , \ell }$ is constructed and claimed to be the $\ell$-sheeted cover associated to $\lambda$. This claim relies on \cite[Lemma 2.4]{kloeckner-sabourau}, which states that a certain $(n-1)$-cycle $H_{\lambda} $ in $M$ represents the Poincar\'e dual of $\lambda$. This is false. The cycle $H_{\lambda}$ is constructed using only the information encoded in the $2$-sheeted cover $\overline{M}_{\lambda}$ associated to $\lambda$, but distinct primitive cohomology classes $\lambda ,  \lambda ' \in H^1(M ; \mathbb{Z})$ can have the same associated $2$-sheeted cover $ \overline{M}_{\lambda } = \overline{M}_{\lambda '}$. 
        
        \color{black}

        In \cite{zhou},  Zhou showed that in dimension $n = 4$, Theorem \ref{thm:bns} holds if one replaces the lower sectional curvature bound by a two-sided Ricci curvature bound. Their argument again invokes \cite[Theorem 4.2]{kloeckner-sabourau}, but Theorem \ref{thm:torus}  can be used in its place.

        \subsection{Proof strategy} Theorem \ref{thm:torus} will be derived from Theorem \ref{thm:separated-subgroup} below, together with  a result from \cite{guth} regarding volumes of balls in univeral covers of aspherical manifolds (see Theorem \ref{thm:guth}).

	\begin{Thm}\label{thm:separated-subgroup}
		Let $\mathcal{M}$ be a class of pointed proper simply-connected geodesic spaces that is pre-compact in the pointed Gromov--Hausdorff topology. For each $n \in \mathbb{N}$ and $D > 0$ there is $ D' ( n,  D , \mathcal{M})>0$ such that if $(X,p) \in \mathcal{M}$, and  a discrete group $ G \leq \iso (X)$  satisfies  
		\[ G \cong \mathbb{Z}^n, \hspace{2cm}  \diam (X / G ) \leq D  , \] then there is a finite-index subgroup $\Gamma  \leq G$ with $\diam (X / \Gamma  )  \leq D'  $ and
		\[  d ( g p , p  ) \geq D \text{ for all } g \in \Gamma  \backslash \{ e \} .  \]
	\end{Thm}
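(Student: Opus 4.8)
The plan is to argue by contradiction, extracting an equivariant pointed Gromov--Hausdorff limit of a hypothetical bad sequence; the precompactness of $\mathcal M$ is what makes the limit available. First some soft reductions. Since $X$ is proper and $G$ is discrete in $\iso(X)$, the action is properly discontinuous, and since $\mathbb Z^n$ is torsion-free it is free; hence the displacement function $\ell(g):=d(gp,p)$ is symmetric, subadditive (as $G$ is abelian), proper (finite sublevel sets), and vanishes only at $e$. The bound $\diam(X/G)\le D$ forces $\{g:\ell(g)\le 3D\}$ to generate $G$ and to contain $n$ linearly independent elements, and a geometry-of-numbers argument for such ``length functions'' then produces a $\mathbb Z$-basis $w_1,\dots,w_n$ of $G$ with $\ell(w_j)\le C(n)D$. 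Moreover, for any finite-index $\Gamma\le G$ one has $R(\Gamma)\le\diam(X/\Gamma)\le R(\Gamma)+2D$, where $R(\Gamma):=\sup_{a\in G}\min_{\gamma\in\Gamma}\ell(\gamma a)$ is the largest $\ell$-length of a shortest coset representative; and the set of finite-index $\Gamma$ with $\ell\ge D$ on $\Gamma\setminus\{e\}$ is nonempty, since one can avoid the finite set $\{g:0<\ell(g)<D\}$. So it suffices to produce, with a bound depending only on $n,D,\mathcal M$, such a $\Gamma$ with $R(\Gamma)$ controlled. Were the stable norm $\|v\|:=\lim_k\ell(v^k)/k$ uniformly bounded below on primitive vectors and $\ell$ uniformly comparable to $\|\cdot\|$ on a reduced basis, one could finish directly by Minkowski-reducing $\|\cdot\|$ and taking $\Gamma=\bigoplus_j k_j\mathbb Z w_j$ with $k_j\asymp D/\|w_j\|$; but these quantities are not uniformly controlled (e.g. $\mathbb Z$ acting on $\mathbb R$ by an arbitrarily small translation), which is precisely the role of compactness.

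Assume the theorem fails for some $n,D$: there are $(X_i,p_i)\in\mathcal M$ and $G_i\cong\mathbb Z^n$ with $\diam(X_i/G_i)\le D$ such that every finite-index subgroup $\Gamma\le G_i$ with $\ell_i\ge D$ on $\Gamma\setminus\{e\}$ has $R(\Gamma)\ge i$. After passing to a subsequence, $(X_i,p_i,G_i)$ converges equivariantly to a triple $(X_\infty,p_\infty,G_\infty)$ with $X_\infty$ a proper geodesic space, $G_\infty\le\iso(X_\infty)$ a closed abelian subgroup, and $\diam(X_\infty/G_\infty)\le D$. The decisive structural input is the identification of $G_\infty$. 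Because the $X_i$ are simply connected and the $G_i$ torsion-free, a no-small-subgroups argument (in the spirit of Fukaya--Yamaguchi) shows $G_\infty$ is a Lie group whose identity component is a vector group $\mathbb R^a$ and whose component group is finitely generated and torsion-free, so $G_\infty\cong\mathbb R^a\times\mathbb Z^b$; and comparing coarse geometry --- each $X_i$ is quasi-isometric, with constants depending only on $D$, to a Cayley graph of $\mathbb Z^n$, hence has polynomial growth of degree at least $n$ with uniform constants, while $X_\infty$ is quasi-isometric to $\mathbb R^a\times\mathbb Z^b$ --- together with pulling back a strongly separated cocompact lattice of $G_\infty$ to $G_i$, forces $a+b=n$. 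I expect this step to be the main obstacle.

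Granting it, $G_\infty\cong\mathbb R^a\times\mathbb Z^b$ acts freely, properly and cocompactly on $X_\infty$, and the stable norm on a cocompact lattice $\cong\mathbb Z^n$ of $G_\infty$ is positive-definite: for a primitive vector $v$, dividing $X_\infty$ by a closed subgroup $H$ with $G_\infty/H$ of rank one and $v\notin H$, and applying Milnor--\v{S}varc to the induced cocompact rank-one action, gives $\ell(v^k)\gtrsim k$ (displacement can only drop under the quotient). Also the asymptotic-cone description of $(G_\infty,d_{p_\infty})$ yields $d(gp_\infty,p_\infty)\le 2\|g\|+C(X_\infty)$. Hence, exactly as in the first paragraph but now with all constants finite, Minkowski-reducing $\|\cdot\|$ and scaling the reduced basis produces a cocompact $\Lambda\cong\mathbb Z^n\le G_\infty$ with $d(\lambda p_\infty,p_\infty)>2D$ for every $\lambda\ne e$ and $R(\Lambda)\le D_1(X_\infty)<\infty$.

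Finally I would pull $\Lambda$ back. Choose $g^{(i)}_1,\dots,g^{(i)}_n\in G_i$ approximating a basis of $\Lambda$ and set $\Lambda_i:=\langle g^{(i)}_1,\dots,g^{(i)}_n\rangle$. For $i$ large one verifies: $\Lambda_i$ has rank $n$, hence is finite-index in $G_i$ (a relation of lower rank among the $g^{(i)}_l$ would, in the limit, produce a nontrivial element of $\Lambda$ at distance $0$ from $p_\infty$, contradicting the strict $2D$-separation); $\ell_i\ge D$ on $\Lambda_i\setminus\{e\}$, because displacements of elements of $\Lambda_i$ converge to those of the corresponding elements of $\Lambda$, which exceed $2D$; and $R(\Lambda_i)\le D_1(X_\infty)+1$, because the cosets of $\Lambda_i$ and their shortest representatives converge to those of $\Lambda$. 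Then $\Lambda_i$ is a finite-index, $D$-separated subgroup with $R(\Lambda_i)$ bounded independently of $i$, contradicting $R(\Lambda_i)\ge i$. As indicated, the delicate points are the Lie-theoretic structure of $G_\infty$ with rank preserved, and the equivariant-convergence lemmas guaranteeing that $\Lambda_i$ is genuinely finite-index, everywhere $D$-separated, and cobounded --- the latter two being subtle precisely because $G_i$ may contain many short nontrivial elements.
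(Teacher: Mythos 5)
Your outer contradiction-and-limit framework matches the paper's, but the step you yourself flag as ``the main obstacle'' is exactly the theorem's entire technical content, and your proposed justifications for it do not work. You need to know that the limit group has a discrete cocompact $\mathbb{Z}^N$ with $N=n$, i.e.\ that the rank cannot jump up in the limit. Neither of your two suggestions establishes this. (1) The claim that a ``no-small-subgroups argument in the spirit of Fukaya--Yamaguchi'' gives $G_\infty\cong\mathbb{R}^a\times\mathbb{Z}^b$ is unsubstantiated: ruling out small subgroups / compact factors in the limit is precisely what requires a covering-space construction exploiting simple connectedness of the $X_i$, and that construction is the paper's main technical work (Theorem \ref{thm:monodromy} together with defining sets, fed into Theorem \ref{thm:main-technical}). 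Simple connectedness is essential -- drop it and rank does jump (e.g.\ $\mathbb{Z}$ acting on a flat cylinder by an irrational rotation combined with a $1/i$ vertical translation has limit group of rank $2$) -- so any correct proof must use it through some such mechanism, and your sketch contains none. (2) The coarse-geometry/growth comparison only gives \emph{lower} volume/orbit-growth bounds of degree $n$ with uniform constants; the upper bounds are not uniform in $i$ (already $\mathbb{Z}$ acting by translation $1/i$ on $\mathbb{R}\times Y_i$ has orbit counts blowing up), and it is an \emph{upper} bound you would need to exclude $a+b>n$. The paper instead shows that if the diameters blow up, Pontrjagin's lemma produces a cocompact $\mathbb{Z}^N\le G_\infty$ with $N>n$, approximates it by necessarily non-injective homomorphisms $\mathbb{Z}^N\to G_i$, and uses the monodromy construction to manufacture a nontrivial cover of the simply connected $X_i$ -- a contradiction. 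Nothing in your proposal substitutes for this.

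There is also a secondary gap in the pull-back step. Your verification that $\Lambda_i$ is $D$-separated and cobounded argues by convergence of displacements and of shortest coset representatives, but these are statements about infinitely many group elements (and possibly an unbounded number of cosets, since $[G_i:\Lambda_i]$ need not be bounded), and pointwise equivariant convergence of finitely many generators does not control them; elements of $\Lambda_i$ that are large in word length could a priori have small displacement at stage $i$ even though their ``limits'' escape to infinity. The paper avoids this entirely by obtaining the separated finite-index subgroup $\Gamma_i$ at finite $i$ from Gromov's short-basis lemma (Theorem \ref{thm:gromov}), so that only the diameter bound remains to be proved by the limiting argument. As written, your proof reduces the theorem to an unproved structural claim plus uniform-in-$i$ assertions that need their own quantitative arguments, so it does not close.
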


	Theorem \ref{thm:separated-subgroup} will be obtained by combining two results. The first ingredient is a classical construction from \cite[Section 5C]{gromov} (see also \cite[Lemma 3.1]{colding}). Recall that for a proper geodesic space $X$ and a discrete group $G \leq \iso (X)$, the abelianization $G^{ab} : = G/ [G,G]$ admits a natural action on $X / [G,G]$.
	
	\begin{Thm}[Gromov]\label{thm:gromov}
		Let $X$ be a proper geodesic space and $ G \leq \iso (X)$ a discrete group with $  \diam (X / G ) \leq D $.  Then for any $p \in X / [G,G]$, there is a set  $\{ g_1, \ldots , g_n  \} \subset G ^{ab}$ with
		\begin{gather*}
			d ( g p, p )  \geq D  \text{ for all } g \in \langle g _1, \ldots , g _n \rangle   \backslash \{ e \}          , \\
			d ( g _j p, p )  \leq 2D \text{ for all } j \in \{1 , \ldots , n \} ,      
		\end{gather*}  
		and such that  $  \{ g_1 , \ldots , g _n  \} $ is a basis of $ G^{ab} \otimes \mathbb{R}$. 
	\end{Thm}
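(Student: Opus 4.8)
The plan is to reduce to a one‑dimensional statement and then build the desired set by induction. Write $Y:=X/[G,G]$ and $\Gamma:=G^{ab}$. Since $G$ is a discrete group of isometries of the proper geodesic space $X$ it acts properly discontinuously, so $[G,G]$ does too and $Y$ is proper geodesic; moreover $\Gamma$ acts on $Y$ isometrically and properly discontinuously with $\diam(Y/\Gamma)=\diam(X/G)\le D$, and (as $G$ is finitely generated by the \v{S}varc--Milnor lemma) $\Gamma$ is finitely generated abelian, so $n:=\dim_{\mathbb R}(\Gamma\otimes\mathbb R)<\infty$ is exactly the integer in the statement. I will produce $g_1,\dots,g_n\in\Gamma$ successively so that at each stage $k$ the subgroup $H_k:=\langle g_1,\dots,g_k\rangle$ is free abelian of rank $k$, one has $d(g_jp,p)\le 2D$ for $j\le k$, and $d(gp,p)\ge D$ for every $g\in H_k\setminus\{e\}$. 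The step from $k$ to $k+1$ takes place in the quotient $Z_k:=Y/H_k$, which is again proper geodesic, on which $\Gamma_k:=\Gamma/H_k$ acts properly discontinuously with $\diam(Z_k/\Gamma_k)=\diam(Y/\Gamma)\le D$, and $\Gamma_k$ is infinite whenever $k<n$ (its rank is $n-k$).

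The crucial ingredient is a rank‑one lemma: \emph{if a finitely generated infinite abelian group $\Gamma_0$ acts isometrically and properly discontinuously on a proper geodesic space $Z$ with $\diam(Z/\Gamma_0)\le D$, then for every $q\in Z$ there is $g\in\Gamma_0$ of infinite order with $d(gq,q)\le 2D$ and $d(g^mq,q)\ge D$ for all $m\in\mathbb Z\setminus\{0\}$.} I would prove it in two moves. First, $S:=\{g\in\Gamma_0:d(gq,q)\le 2D\}$ generates $\Gamma_0$: otherwise pick $g\notin\langle S\rangle$ minimizing $d(gq,q)$ (so $d(gq,q)>2D$), let $z$ be the midpoint of a geodesic from $q$ to $gq$, and choose $h\in\Gamma_0$ with $d(z,hq)\le D$; then $d(hq,q)\le D+\tfrac12 d(gq,q)<d(gq,q)$ and likewise $d(h^{-1}gq,q)<d(gq,q)$, so $h,h^{-1}g\in\langle S\rangle$ by minimality and $g=h(h^{-1}g)\in\langle S\rangle$, a contradiction. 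As $\langle S\rangle=\Gamma_0$ is infinite, $S$ is not contained in the finite torsion subgroup, so it contains some $g_0$ of infinite order. Second, set $f(k):=d(g_0^kq,q)$, an even subadditive function with finite sublevel sets, and put $V:=\{k\ge 1: f(k)\le 2D\}\ni 1$, $U:=\{k\ge 1: f(k)<D\}$. If some $k\in V$ has no positive multiple in $U$, then $g:=g_0^k$ works. Otherwise every $k\in V$ has a multiple in the nonempty finite set $U$, so with $K:=\max U$ every $k\in V$ satisfies $k\le K$; hence $\max V=K$ (as $K\in U\subseteq V$), while $f(2K)\le 2f(K)<2D$ forces $2K\in V$ with $2K>K$, a contradiction. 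This proves the lemma.

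Given $g_1,\dots,g_k$, apply the lemma to $(Z_k,\Gamma_k,q_k)$ with $q_k$ the image of $p$, getting $\bar g\in\Gamma_k$ of infinite order with $d_{Z_k}(\bar g\,q_k,q_k)\le 2D$ and $d_{Z_k}(\bar g^{\,m}q_k,q_k)\ge D$ for $m\ne 0$, and let $g_{k+1}\in\Gamma$ be a lift of $\bar g$ attaining $\min\{d_Y(\hat g p,p):\hat g\mapsto\bar g\}=d_{Z_k}(\bar g\,q_k,q_k)\le 2D$ (the minimum is attained because $H_k$ acts properly discontinuously). Since $\bar g$ has infinite order in $\Gamma/H_k$, the elements $g_1,\dots,g_{k+1}$ remain linearly independent in $\Gamma\otimes\mathbb R$ and $H_{k+1}$ is free of rank $k+1$, with every element uniquely $w+mg_{k+1}$, $w\in H_k$; and $d(gp,p)\ge D$ for $g=w+mg_{k+1}\ne e$ — by the inductive hypothesis if $m=0$, and if $m\ne 0$ because the $1$‑Lipschitz projection $Y\to Z_k$ carries $gp$ and $p$ to $(m\bar g)q_k$ and $q_k$, whence $d_Y(gp,p)\ge d_{Z_k}((m\bar g)q_k,q_k)\ge D$. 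After $n$ steps $g_1,\dots,g_n$ are linearly independent in $\Gamma\otimes\mathbb R=G^{ab}\otimes\mathbb R$, hence a basis, and the two displacement estimates hold, which is the assertion.

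I expect the rank‑one lemma to be the main obstacle, and within it the point that forces work is obtaining the \emph{sharp} bound $d(gq,q)\le 2D$ for the generator: merely taking a shortest element whose cyclic subgroup is $D$‑separated need not have displacement $\le 2D$, and only the combination of the sharp generation fact (displacement‑$\le 2D$ elements already generate) with the doubling/counting argument yields it. The remaining points — that the successive quotient actions of $\Gamma/H_k$ on $Y/H_k$ are again properly discontinuous with the same quotient diameter, that the minimum defining the lift $g_{k+1}$ is attained, and the elementary bookkeeping for the free abelian $H_k$ and the finite torsion of $G^{ab}$ — are routine and I would treat them briefly.
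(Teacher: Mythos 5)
The paper does not prove this theorem; it is attributed to Gromov (Section 5C of his book), with Colding's Lemma 3.1 also cited, so there is no in-paper argument to compare against. Your proof is correct and follows the standard inductive ``short basis'' scheme: quotient $Y$ by the span $H_k$ of the generators chosen so far, find in $Y/H_k$ an infinite-order element of $\Gamma/H_k$ with displacement at most $2D$ whose nonzero powers all have displacement at least $D$, lift it to $\Gamma$, and recurse. The real content is your rank-one lemma, whose proof is sound: the Milnor--\v{S}varc midpoint argument shows that $S=\{g:d(gq,q)\le 2D\}$ generates $\Gamma_0$ and hence contains an infinite-order $g_0$; and the $U$/$V$ counting argument, using evenness and subadditivity of $f(k)=d(g_0^k q,q)$ together with finiteness of sublevel sets of $f$ (from proper discontinuity and $g_0$ having infinite order), correctly excludes the case that every $k\in V$ has a positive multiple in $U$, so some $g_0^k$ has $f(k)\le 2D$ and $f(mk)\ge D$ for all $m\ge 1$. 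The routine points you defer -- that $\Gamma/H_k$ acts properly discontinuously on $Y/H_k$ with the same quotient diameter, that the minimal-displacement lift of $\bar g$ is attained and realizes $d_{Z_k}(\bar g q_k,q_k)$, and that exactness of $\otimes\,\mathbb{R}$ gives $\Gamma/H_k$ rank $n-k$ -- are indeed standard and go through as you indicate.
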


	The second ingredient is the main technical result of this paper. 
	
	\begin{Thm}\label{thm:main-technical}
		Let $(X_i,p_i)$ be a sequence of simply-connected pointed proper geodesic spaces and $\Gamma _ i \leq G_i \leq \iso (X_i)$ discrete groups with $      \Gamma _i \cong  G_i \cong \mathbb{Z}^n $ and $  \diam (X_i / G_i ) \leq D . $ Assume 
		\[       d ( g  p_i, p_i) \geq D \text{ for all }  g \in \Gamma _i \backslash \{ e \} ,                  \]
		and that for each $i \in \mathbb{N} $ there  is an isomorphism $\varphi _i : \mathbb{Z}^n \to \Gamma _i  $ with 
		\begin{equation}\label{eq:phi-no-escape}
			d (\varphi _i (e_j) p_i,p_i) \leq 2D  \text{ for all } j \in \{1, \ldots , n \}.        
		\end{equation}
		If the sequence $(X_i,p_i)$ converges in the pointed Gromov--Hausdorff sense to a space $(X,p)$, then 
		\begin{equation}\label{eq:diam-conclusion}
			\sup _i    \diam (X_i / \Gamma _ i )     < \infty .    
		\end{equation}
	\end{Thm}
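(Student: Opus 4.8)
The plan is to argue by contradiction and pass to an equivariant Gromov--Hausdorff limit. Suppose $\sup_i \diam(X_i/\Gamma_i) = \infty$; after passing to a subsequence we may assume $\diam(X_i/\Gamma_i) \to \infty$. Two preliminary remarks: since $\Gamma_i \cong G_i \cong \mathbb{Z}^n$, the subgroup $\Gamma_i$ has finite index in $G_i$, so each $X_i/\Gamma_i$ is a finite‑sheeted cover of $X_i/G_i$ and in particular has finite diameter; and since $X_i$ is geodesic with $\diam(X_i/G_i) \le D$, the group $G_i$ is generated by $\{g \in G_i : d(gp_i,p_i) \le 3D\}$. I will also use the elementary fact that a pointed Gromov--Hausdorff limit of proper geodesic spaces whose diameters tend to infinity must be unbounded (transport a unit‑speed geodesic emanating from the basepoint to a far point through a Gromov--Hausdorff approximation defined on a sufficiently large ball, obtaining in the limit a point arbitrarily far from the basepoint).

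By the pre‑compactness of $\{(X_i,p_i)\}$ and the equivariant Gromov--Hausdorff compactness theorem (Fukaya--Yamaguchi), pass to a further subsequence so that $(X_i,p_i,G_i,\Gamma_i)$ converges equivariantly to $(X,p,G,\Gamma)$ with $\Gamma \le G$ closed subgroups of $\iso(X)$, and so that $X_i/G_i \to X/G$ and $X_i/\Gamma_i \to X/\Gamma$ in the pointed Gromov--Hausdorff sense. From $\diam(X_i/G_i) \le D$ we obtain $\diam(X/G) \le D$, so $G$ acts cocompactly on $X$; being a limit of abelian groups $G$ is abelian, and being compactly generated it has the form $G \cong \mathbb{R}^a \times \mathbb{Z}^b \times K$ with $K$ compact, so $G$ --- and hence $X$, which is quasi-isometric to $G$ --- has polynomial volume growth of degree $a+b$. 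On the other hand $\Gamma$ is discrete: the separation hypothesis $d(gp_i,p_i)\ge D$ for $g\in\Gamma_i\setminus\{e\}$ forces every nontrivial element of $\Gamma$ to move $p$ by at least $D$. Moreover $\Gamma$ has polynomial growth of degree at least $n$: the elements $\varphi_i(e_j)$ subconverge to commuting $a_1,\dots,a_n\in\Gamma$ with $d(a_jp,p)\le 2D$, and for distinct $v,w\in\mathbb{Z}^n$ the isometry $\varphi_i(v-w)$ moves $p_i$ by at least $D$ by \eqref{eq:phi-no-escape} and the separation hypothesis, so the $(2m+1)^n$ elements $\prod_j a_j^{v_j}$ with $\|v\|_\infty\le m$ are pairwise distinct, pairwise $D$-separated, and contained in $\{g : d(gp,p)\le 2Dnm\}$.

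The step I expect to be the crux is showing that the growth degree of $X$ is at most $n$, i.e.\ that $a+b\le n$. Each $X_i$ is quasi-isometric to $\mathbb{Z}^n$ --- by the \v{S}varc--Milnor lemma applied to the cocompact action of $G_i$, with constants depending only on $D$ --- and so has growth degree exactly $n$; morally the growth degree cannot increase in a Gromov--Hausdorff limit, but the subtlety is that the growth \emph{constant} of $X_i$ can degenerate when the $G_i$-orbits collapse, so the naive comparison is not uniform. I would resolve this either by a packing argument controlling $B(p_i,r)$ in terms of the $D$-separated orbit $\Gamma_i p_i$ rather than the collapsing orbit $G_i p_i$, or by invoking the structure theory of equivariant Gromov--Hausdorff limits of $\mathbb{Z}^n$-actions to produce directly a cocompact subgroup $\mathbb{R}^a\times\mathbb{Z}^b\le G$ with $a+b\le n$.

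Granting $a+b\le n$: combining with the lower bound on the growth of $\Gamma$ and with $\Gamma\le G$ forces the growth degree of $G$ to equal $a+b=n$ and that of $\Gamma$ to equal $n$ as well. A discrete subgroup $\Gamma$ of $\mathbb{R}^a\times\mathbb{Z}^b\times K$ whose orbit realizes this full growth degree must be cocompact: otherwise its closure would be a closed subgroup with deficient $\mathbb{R}^a\times\mathbb{Z}^b$-part, hence of growth degree $<n$, contradicting the packing estimate for the $D$-separated orbit $\Gamma p$. Hence $\Gamma$ is cocompact in $G$, and since $X/G$ is compact so is $X/\Gamma$. But $X/\Gamma$ is the pointed Gromov--Hausdorff limit of the spaces $X_i/\Gamma_i$, whose diameters tend to infinity, contradicting the unboundedness fact recorded in the first paragraph. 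This contradiction shows $\sup_i\diam(X_i/\Gamma_i)<\infty$.
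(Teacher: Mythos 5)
Your first two paragraphs do match the opening of the paper's argument: pass to an equivariant limit $(X,p,G,\Gamma)$, note $X/G$ is compact while $X/\Gamma$ is not, and use \eqref{eq:phi-no-escape} plus the $D$-separation to produce a discrete copy of $\mathbb{Z}^n$ inside $\Gamma$ with $2D$-short generators. But the step you yourself flag as the crux --- that the ``rank'' $a+b$ of the limit group, equivalently the growth degree of $X$, is at most $n$ --- is exactly where the proposal stops being a proof: you offer two candidate strategies (a packing argument against the $D$-separated orbit $\Gamma_i p_i$, or an unspecified structure theory of limits of $\mathbb{Z}^n$-actions) without carrying either out, and neither can be carried out as stated. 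The packing route fails because neither orbit gives a two-sided comparison: the $G_i$-orbit is $D$-dense but not separated (so counting it wildly overestimates the size of balls in $X_i$), while the $\Gamma_i$-orbit is $D$-separated but, in the contradiction scenario, is precisely \emph{not} coarsely dense (that is what $\diam(X_i/\Gamma_i)\to\infty$ means), so it cannot control $B(p_i,r)$ from above either. More fundamentally, your outline never uses the simple-connectedness of the $X_i$, and the claim you need is false without it. Take $X_i=\mathbb{S}^1(i)\times\mathbb{R}$ (circle of circumference $i=q_iD$), let $G_i\cong\mathbb{Z}$ be generated by the screw motion rotating by arclength $D$ and translating by $D/q_i$, and let $\Gamma_i\leq G_i$ be the index-$q_i$ subgroup of pure translations by multiples of $D$. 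Then $\diam(X_i/G_i)\leq D$, every nontrivial element of $\Gamma_i$ moves $p_i$ by at least $D$, the generator moves it by exactly $D\leq 2D$, and $(X_i,p_i)\to(\mathbb{R}^2,0)$, which has growth degree $2>n=1$; moreover $\diam(X_i/\Gamma_i)\geq i/2\to\infty$. So every step of your plan except simple-connectedness is satisfied while the conclusion fails, which means no growth/quasi-isometry/packing argument blind to $\pi_1$ can close the gap.

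For comparison, the paper does not try to show the limit has growth degree at most $n$; it accepts that $G$ may contain a discrete co-compact $H\cong\mathbb{Z}^N$ with $N>n$ (produced via Pontrjagin's lemma applied to $G/\varphi(\mathbb{Z}^n)$), approximates its generators by elements of $G_i$ to get a non-injective homomorphism $\Phi_i:\mathbb{Z}^N\to G_i$ whose image acts co-compactly, and then uses the monodromy-type covering construction of Theorem \ref{thm:monodromy} (with a defining set $T$ of $\mathbb{Z}^N$ and the verification of conditions \eqref{item:c1}--\eqref{item:c3} for large $i$) to build a nontrivial covering space of $X_i$, contradicting simple-connectedness. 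That covering construction is the technical heart of the theorem and is the ingredient your proposal is missing.
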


       The following result implies that Theorem \ref{thm:torus} cannot be generalized to nilmanifolds or other families of manifolds admitting metrics of almost non-negative Ricci curvature and having torsion-free non-virtually abelian fundamental groups   \cite[Proposition 1.5]{zamora-zhu}.

	\begin{Pro}\label{pro:zhu}
		Let $M_i$ be a sequence of closed $n$-dimensional Riemannian manifolds with torsion-free fundamental group and 
		\[\Ric (M_i) \geq -\frac{1}{i}, \hspace{1.7cm} \diam (M_i ) \leq D . \hspace{0.3cm}  \] 
		If there is a sequence of covering spaces $M_i ' \to M_i$ with    
		\[    \vol (M_i') \geq \varepsilon  , \hspace{2cm}   \diam (M_i ') \leq D'  \]
		for some $\varepsilon > 0 $ and $D' > 0 $, then $\pi_1(M_i)$ is virtually abelian for  $i$ large enough.
	\end{Pro}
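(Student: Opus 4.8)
The plan is to reduce, via Bishop--Gromov volume comparison, to a non-collapsing statement, and then to analyze an equivariant Gromov--Hausdorff limit of universal covers. Since a covering map is a local isometry, $\Ric(M_i') \geq -1/i$, and by hypothesis $\diam(M_i') \leq D'$; Bishop--Gromov then gives $\vol(M_i') \leq V(n,D') < \infty$, so the covering $M_i' \to M_i$ has finite degree and $\pi_1(M_i')$ has finite index in $\pi_1(M_i)$. Since a group with a finite-index virtually abelian subgroup is itself virtually abelian, it suffices to prove: \emph{if $L_i^n$ is a sequence of closed Riemannian manifolds with $\Ric(L_i) \geq -1/i$, $\diam(L_i) \leq D'$ and $\vol(L_i) \geq \varepsilon$, then $\pi_1(L_i)$ is virtually abelian for all large $i$}. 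Suppose not; after passing to a subsequence, $\pi_1(L_i)$ is never virtually abelian, and in particular is infinite.

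Let $\tilde L_i \to L_i$ be the universal cover, with $\pi_1(L_i)$ acting freely, properly discontinuously and cocompactly by isometries. Every ball in $\tilde L_i$ surjects, under the local isometry $\tilde L_i \to L_i$, onto the corresponding ball in $L_i$, so Bishop--Gromov on $L_i$ together with $\vol(L_i) \geq \varepsilon$ gives $\vol(B_r(q)) \geq c(n,D',\varepsilon)\, r^n$ for every $q \in \tilde L_i$ and $0 < r \leq D'$: the spaces $\tilde L_i$ are \emph{uniformly non-collapsed}. After passing to a subsequence, $(\tilde L_i, \tilde p_i, \pi_1(L_i))$ converges in the equivariant pointed Gromov--Hausdorff topology to $(Y,y,G)$, where $Y$ is a non-collapsed Ricci limit space (an $\mathrm{RCD}(0,n)$ space, so $\dim_H Y = n$ and $\iso(Y)$ is a Lie group), $G \leq \iso(Y)$ is closed, and $Y/G = \lim_i L_i$ (\cite{fukaya-yamaguchi}). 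Since $\pi_1(L_i)$ is infinite, $\tilde L_i$ has infinite volume, while a bounded-diameter non-collapsed space has finite volume; thus $\tilde L_i$ is unbounded, and because its quotient $L_i$ has diameter $\leq D'$, for every $R>0$ there is $\gamma_i \in \pi_1(L_i)$ with $d(\gamma_i \tilde p_i, \tilde p_i) \in [R, R+2D']$, whence $Y$ is non-compact. Finally $Y/G$ is a non-collapsed Ricci limit, so $\dim_H(Y/G) = n = \dim_H Y$ and the $G$-orbits are $0$-dimensional; hence the identity component $G^0$ fixes a dense set, so $G^0 = \{e\}$ and $G$ is discrete.

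By the splitting theorem (Cheeger--Colding, or its $\mathrm{RCD}$ version), $Y = \mathbb{R}^k \times Z$ with $Z$ containing no line; the Euclidean factor is canonical, so $\iso(Y) = \iso(\mathbb{R}^k) \times \iso(Z)$ and the closure of the image of $G$ in $\iso(Z)$, being the image of the compact set $Y/G$, acts cocompactly on $Z$. But a non-compact proper geodesic space with a cocompact isometry group contains a line --- translate an arbitrary ray by the cocompact action and extract an Arzel\`a--Ascoli sublimit --- so $Z$, which contains no line, must be compact. Now $\iso(Z)$ is compact: the kernel of $G \to \iso(\mathbb{R}^k)$ is a discrete subgroup of a compact group, hence finite, and the image of $G$ in $\iso(\mathbb{R}^k)$ is discrete (lift a sequence tending to the identity using compactness of $\iso(Z)$ and discreteness of $G$) and cocompact on $\mathbb{R}^k$, hence a crystallographic group. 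Therefore $G$ is virtually $\mathbb{Z}^k$.

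It remains to transfer this structure back: I claim $\pi_1(L_i)$ is virtually $\mathbb{Z}^k$ for all large $i$, contradicting the choice of the subsequence. I expect this step to be the main obstacle. By the generalized Margulis lemma \cite{kw}, $\pi_1(L_i)$ contains a nilpotent subgroup of index $\leq C(n)$, so what is needed is a bound on its nilpotency degree --- equivalently, to produce $k$ independent ``linear'' directions. Since $\tilde L_i$ converges, in the non-collapsed sense, to $\mathbb{R}^k \times Z$ with $Z$ compact, the quantitative almost-splitting theorem of Cheeger--Colding provides $\delta_i$-splitting maps $u_i \colon B_R(\tilde p_i) \to \mathbb{R}^k$ on balls of radius $R \to \infty$, with $\delta_i \to 0$; since $\tilde L_i \to L_i$ is a normal covering and the Euclidean factor of the limit is canonical, the $u_i$ can be chosen almost $\pi_1(L_i)$-equivariant, and --- crucially because the complementary factor $Z$ is compact, so that only boundedly many deck transformations displace $\tilde p_i$ by a bounded amount --- the assignment $\gamma \mapsto u_i(\gamma \tilde p_i) - u_i(\tilde p_i)$ gives, for $i$ large, a homomorphism $\pi_1(L_i) \to \mathbb{Z}^k$ with kernel of uniformly bounded cardinality and image of uniformly bounded index. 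Hence $\pi_1(L_i)$ is virtually $\mathbb{Z}^k$. (Alternatively, one may invoke the known fact, in the circle of ideas of Cheeger--Colding and Kapovitch--Wilking, that if the universal covers of almost-nonnegatively-Ricci-curved manifolds of bounded diameter are uniformly non-collapsed and converge to a product of a Euclidean space with a compact space, then their fundamental groups are virtually abelian for $i$ large.) Throughout, the decisive input is the uniform non-collapsing of $\tilde L_i$ from the second paragraph, which is what forces $G$ to be discrete, $Z$ to be compact, and the above kernel to be finite.
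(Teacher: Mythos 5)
The paper does not prove Proposition \ref{pro:zhu}; it imports it verbatim from \cite[Proposition 1.5]{zamora-zhu}, so there is no in-paper proof to compare against. I will therefore assess your argument on its own merits.

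Your reduction (finite degree of $M_i' \to M_i$ via Bishop--Gromov, passage to $L_i := M_i'$) and the analysis of the equivariant limit are sound: uniform non-collapsing of $\tilde L_i$, discreteness of $G$, the splitting $Y \cong \mathbb{R}^k \times Z$ with $Z$ compact, and the conclusion that $G$ is virtually $\mathbb{Z}^k$ all go through (modulo the usual care with $0$-dimensional orbits and RCD structure theory). The genuine gap is exactly where you flag it: transferring \emph{virtual abelianness of the limit group $G$} to \emph{virtual abelianness of $\pi_1(L_i)$ for large $i$}. The assignment $\gamma \mapsto u_i(\gamma \tilde p_i) - u_i(\tilde p_i)$ built from $\delta_i$-splitting maps is, even granting almost-equivariance, only an \emph{approximate} homomorphism into $\mathbb{R}^k$, not an exact homomorphism into $\mathbb{Z}^k$; promoting it requires (a) identifying a lattice coming from the crystallographic structure of the image of $G$ in $\iso(\mathbb{R}^k)$, (b) showing the error $\delta_i$ is eventually smaller than the lattice spacing, and (c) showing that rounding yields a genuine homomorphism. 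None of this is carried out, and it is not a routine check: a priori an approximate homomorphism need not be close to any exact one. A further symptom that something is missing is that the torsion-free hypothesis on $\pi_1(M_i)$ is never used anywhere in your argument; in the standard approaches (Kapovitch--Wilking Margulis lemma producing a bounded-index nilpotent subgroup, then ruling out non-abelian torsion-free nilpotent subgroups such as the integral Heisenberg group, or a monodromy/covering construction in the spirit of Theorem \ref{thm:monodromy}), this hypothesis enters essentially. As written, the final paragraph is a plausible heuristic, but it is precisely the content of the proposition, and leaving it at this level of detail means the proof is not complete.
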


	We point out that all the tools we use throughout this paper are elementary with the exception of Theorem \ref{thm:guth}, which is used only to derive Theorem \ref{thm:torus} from Theorem \ref{thm:separated-subgroup}.

    \subsection{Outline and contents}\label{sec:outline}	Theorem \ref{thm:main-technical} is proven by contradiction. If \eqref{eq:diam-conclusion} fails, then after passing to a subsequence, the groups $G_i$ converge to a group $G\leq \iso (X)$  with $X/G$ compact,  and the groups  $\Gamma _i $ converge to a discrete group $\Gamma \leq \iso (X) $  with 
	\[  \hspace{1.5cm}  \mathbb{Z}^n \leq \Gamma, \hspace{3cm}   X/ \Gamma \text{ not compact.} \] 
	In that case, the group $G$ contains a discrete co-compact subgroup $H \leq G$ isomorphic to $\mathbb{Z}^N$ with $N > n$. This implies that for $i$ sufficiently large, $\mathbb{Z}^N$ admits a discrete co-compact action on $X_i$ that resembles the $\mathbb{Z}^N$-action on $X$.  Note that since $N > n$, the action of $\mathbb{Z}^N$ on $X_i$ is very much not faithful. 
	
    Here we apply Theorem \ref{thm:monodromy}; a monodromy-like construction of covering spaces similar to the ones from  \cite{fukaya-yamaguchi, pan-wang, zamora}. From the unfaithfulness of the $\mathbb{Z}^N$-action on $X_i$, this construction produces a non-trivial covering space of $X_i$, contradicting the fact that it is simply-connected.

   In Section \ref{sec:lit} we discuss related results.  In Section \ref{sec:prelim} we recall the results required for Theorem  \ref{thm:main-technical}.  In Section \ref{sec:covers} we prove Theorem \ref{thm:monodromy}. In Section \ref{sec:proof} we prove Theorem \ref{thm:main-technical} following the above outline, and deduce Theorem \ref{thm:separated-subgroup} from Theorems \ref{thm:gromov} and \ref{thm:main-technical}. In Section \ref{sec:ricci} we prove Theorem \ref{thm:torus} combining Theorem  \ref{thm:separated-subgroup} with \cite[Corollary 3]{guth}.

    \section{Related results and problems}\label{sec:lit}
    
    It would be interesting to know if in Theorem \ref{thm:torus} one can drop the Ricci curvature hypothesis. This would be true if one could remove the pre-compactness condition on $\mathcal{M}$ in Theorem \ref{thm:separated-subgroup}.   
	
	\begin{Con}\label{con:general}
	    For each $n \in \mathbb{N}$, $D>0$,  there is $ D'(n, D)>0$ such that if $(X, p)$ is a pointed simply-connected proper geodesic space, and  $ G \leq \iso (X)$ a discrete group with 
		\[ G \cong \mathbb{Z}^n, \hspace{2cm}  \diam (X / G ) \leq D  , \] 
        then there is a finite-index subgroup $\Gamma  \leq G$ with $\diam (X / \Gamma  )  \leq D'  $ and
		\[  d ( g p , p  ) \geq D \text{ for all } g \in \Gamma  \backslash \{ e \} .  \]
	\end{Con}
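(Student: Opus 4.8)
The plan is to reduce the Conjecture, via Gromov's Theorem~\ref{thm:gromov}, to a version of Theorem~\ref{thm:main-technical} that no longer presupposes a precompact family. Since $G \cong \mathbb{Z}^n$ is abelian we have $[G,G] = \{e\}$ and $X/[G,G] = X$, so Theorem~\ref{thm:gromov} applies at the given basepoint $p$ and produces elements $g_1, \dots, g_n \in G$ forming a basis of $G \otimes \mathbb{R}$ with $d(g_j p, p) \le 2D$ for every $j$, and $d(gp, p) \ge D$ for every $g \in \Gamma_0 \setminus \{e\}$, where $\Gamma_0 := \langle g_1, \dots, g_n \rangle$. Because the $g_j$ span $G \otimes \mathbb{R}$, the subgroup $\Gamma_0$ has finite index in $G$, and by construction it already satisfies the separation requirement of the Conjecture. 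Hence the Conjecture follows once one shows $\diam(X/\Gamma_0) \le D'(n, D)$ with $D'$ depending only on $n$ and $D$ --- which is exactly the conclusion~\eqref{eq:diam-conclusion} of Theorem~\ref{thm:main-technical}, but now demanded uniformly over all admissible $(X, p, G)$ rather than only along a Gromov--Hausdorff convergent sequence drawn from a precompact class. (In particular, removing the precompactness of $\mathcal{M}$ from Theorem~\ref{thm:separated-subgroup} would also let one drop the Ricci bound in Theorem~\ref{thm:torus}.)

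I would attack this by contradiction, mirroring the proof of Theorem~\ref{thm:main-technical}: assume the Conjecture fails, so there are pointed spaces $(X_i, p_i)$ and groups $G_i \cong \mathbb{Z}^n$ with $\diam(X_i/G_i) \le D$ whose Gromov sublattices $\Gamma_0^i$ satisfy $\diam(X_i/\Gamma_0^i) \to \infty$. In the precompact setting one passes to a pointed Gromov--Hausdorff limit $(X,p)$, in which the ambient groups $G_i$ converge to a group $G$ with $X/G$ compact and the $\Gamma_0^i$ converge to a discrete $\Gamma \ge \mathbb{Z}^n$ with $X/\Gamma$ noncompact; the interplay of these forces $G$ to contain a \emph{discrete cocompact} subgroup isomorphic to $\mathbb{Z}^N$ with $N > n$, which transfers back to a discrete cocompact --- hence highly unfaithful --- action of $\mathbb{Z}^N$ on $X_i$ for large $i$, so the monodromy construction (Theorem~\ref{thm:monodromy}) produces a nontrivial covering space of the simply-connected $X_i$, a contradiction. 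To make this limit-free I would try either to extract the same structure from a limit that always exists (e.g.\ an ultralimit of the $(X_i, p_i)$) and detect the extra $\mathbb{Z}^N$ there, or to locate approximate translational symmetries directly inside a very long, thin quotient $X_i/\Gamma_0^i$ --- say by a pigeonhole argument along a geodesic whose length is comparable to $\diam(X_i/\Gamma_0^i)$ --- and then promote them to honest isometries generating a genuine cocompact $\mathbb{Z}^N$-action, keeping every constant dependent only on $n$ and $D$.

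The decisive obstacle --- and the reason this stays a conjecture --- is exactly the manufacture of the ``extra dimension'' $N > n$ without the precompactness that powers Theorem~\ref{thm:main-technical}. An ultralimit of a non-precompact sequence is generally not proper, so the covering-space and monodromy machinery of Section~\ref{sec:covers} does not apply to it; and an individual simply-connected space $X$ need carry no isometries at all beyond those in $G \cong \mathbb{Z}^n$, so the cocompact $\mathbb{Z}^N$ demanded by the argument simply may not exist inside $\iso(X)$. Overcoming this seems to require either a genuinely quantitative substitute for the compactness step --- a direct construction, with constants depending only on $n$ and $D$, of the nontrivial cover that obstructs a too-large $\diam(X/\Gamma_0)$ --- or a way to run the monodromy argument in a weaker limiting category that still retains enough properness; I expect this to be where the real difficulty lies.
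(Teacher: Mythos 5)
This statement is explicitly labeled a \emph{Conjecture} in the paper, and the paper offers no proof of it. The author's own commentary in Section~\ref{sec:lit} makes clear that the conjecture amounts precisely to removing the pre-compactness hypothesis on $\mathcal{M}$ from Theorem~\ref{thm:separated-subgroup}, and that this is open. So there is nothing in the paper to compare your proposal against, and the appropriate evaluation is of your analysis rather than of a finished argument.

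That said, your analysis is accurate as far as it goes, and you are right to stop short of claiming a proof. The reduction in your first paragraph is sound: since $G$ is abelian, Theorem~\ref{thm:gromov} applies directly on $X$ itself and produces a finite-index $\Gamma_0 \leq G$ (finite index because a sublattice of $\mathbb{Z}^n$ spanning $\mathbb{R}^n$ has finite index) with the required $D$-separation at $p$, so the only missing ingredient is a uniform diameter bound for $X/\Gamma_0$. Your second paragraph correctly reproduces the mechanism of Theorem~\ref{thm:main-technical}: extract an equivariant Gromov--Hausdorff limit, locate a discrete co-compact $\mathbb{Z}^N$ with $N > n$ acting on the limit, pull it back to a nearly-$\mathbb{Z}^N$ action on $X_i$ for large $i$, and invoke the monodromy construction of Theorem~\ref{thm:monodromy} to manufacture a non-trivial cover of the simply-connected $X_i$. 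And your third paragraph identifies the genuine obstruction exactly: without pre-compactness there is no pointed Gromov--Hausdorff limit to pass to; ultralimits need not be proper, so Lemma~\ref{lem:lattice} and the covering-space machinery do not apply; and on an individual $X$ there is no reason for the extra $\mathbb{Z}^N \leq \iso(X)$ to exist. You should just be aware, and make explicit, that the proposal as written is a diagnosis of the difficulty rather than a proof --- which is consistent with the fact that the statement is an unresolved conjecture in the paper.

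Two small cautions on your speculative directions. First, the pigeonhole idea along a long geodesic in $X_i/\Gamma_0^i$ produces, at best, \emph{approximate} isometries; the monodromy construction of Section~\ref{sec:covers} consumes genuine group actions and relations, and it is not clear that approximate symmetries with error controlled only by $n$ and $D$ can be upgraded to an honest $\mathbb{Z}^N$-action without some compactness to take a limit in. Second, the Cerocchi--Sambusetti variant recorded in Section~\ref{sec:lit} shows what a successful quantitative substitute looks like, but it buys uniformity by imposing an asymptotic-volume bound $\omega(G,d) \leq \Omega$; any resolution of the Conjecture must find a replacement for that extra input, which is exactly the point you flag.
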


    When a proper geodesic space $X$ admits a discrete co-compact group of isometries $G\leq \iso (X)$ isomorphic to $\mathbb{Z}^n$, the \emph{stable norm} on $G $ is defined as 
        \[      \Vert g \Vert _{st} : = \lim_{k \to \infty } \frac{d( g ^k  p , p ) }{ k }                             \]
    for some $p \in X$.    It is well known that once an isomorphism  $G \cong \mathbb{Z}^n$ has been chosen,  $\Vert \cdot \Vert _{st}$ defines a norm on $\mathbb{R}^n$ and that  $X$ is at finite Gromov--Hausdorff distance from $( \mathbb{R}^n , \Vert \cdot  \Vert _{st} )$ (see \cite{burago, burago-burago-ivanov}). Moreover, in \cite[Theorem 1.1]{cerocchi-sambusetti} it is shown that this distance can be bounded in terms of $n$, $\diam (X/ G )$, and the asymptotic volume defined as
    \[     \omega (G, d) : = \lim_{r \to \infty} \frac{\# \{g \in  G \vert d(gp,p) < r \} }{r^n}  .            \]
    \begin{Thm}[Cerocchi--Sambusetti]\label{thm:cs-volume}
 Let  $(X, p)$ be a pointed proper geodesic space, and  $ G \leq \iso (X)$ a discrete group with 
		\[ G \cong \mathbb{Z}^n, \hspace{2cm}  \diam (X / G ) \leq D  , \hspace{2cm} \omega (G,d) \leq \Omega.  \] 
        Then for all $g \in \mathbb{Z}^n$ one has
        \begin{equation}\label{eq:cs}
            \vert d(gp,p) - \Vert g \Vert _{st} \vert \leq C(n, D, \Omega) . 
        \end{equation}
\end{Thm}
    As a consequence of Theorem \ref{thm:cs-volume}, we obtain the following analogue of Theorem \ref{thm:separated-subgroup}. This implies that  Conjecture \ref{con:general} holds when  the simple-connectedness hypothesis is replaced by a bound on the asymptotic volume.  \color{black}
\begin{Thm}
 For each $n \in \mathbb{N}$, $D>0 $, $\Omega > 0$,  there is $ D'(n, D, \Omega )>0$ such that if $(X, p)$ is a pointed proper geodesic space, and  $ G \leq \iso (X)$ a discrete group with 
		\[ G \cong \mathbb{Z}^n, \hspace{2cm}  \diam (X / G ) \leq D  , \hspace{2cm} \omega (G,d) \leq \Omega,  \] 
        then there is a finite-index subgroup $\Gamma  \leq G$ with $\diam (X / \Gamma  )  \leq D'  $ and
		\[  d ( g p , p  ) \geq D \text{ for all } g \in \Gamma  \backslash \{ e \} .  \]    
\end{Thm}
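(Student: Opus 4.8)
The plan is to deduce the Cerocchi--Sambusetti theorem from Theorem \ref{thm:separated-subgroup} by showing that, once the asymptotic volume $\omega(G,d)$ is bounded above by $\Omega$, the relevant class of spaces is automatically precompact in the pointed Gromov--Hausdorff topology, so that Theorem \ref{thm:separated-subgroup} applies. Concretely, I would work not with $(X,p)$ itself but with the quotient-type data: fix $(X,p)$, $G\cong\mathbb Z^n$ acting discretely on $X$ with $\diam(X/G)\le D$ and $\omega(G,d)\le\Omega$. The key external input is the quantitative comparison of Cerocchi--Sambusetti \cite{cerocchi-sambusetti}: $X$ lies within Gromov--Hausdorff distance $C=C(n,D,\Omega)$ of $(\mathbb R^n,\|\cdot\|)$ for some norm, and moreover the norm can itself be controlled (its unit ball is pinched between balls of radii depending on $n,D,\Omega$), because the flat torus-like quotient $X/G$ has controlled diameter and the lattice $G$ has controlled covolume relative to the word metric.

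Granting that, I would proceed as follows. First, observe that $(\mathbb R^n,\|\cdot\|)$ with a uniformly pinched norm ranges over a GH-precompact family, hence the collection of all such $X$ (being uniformly close to this family) is GH-precompact; adding the basepoint $p$, which we may take to be a fixed lift of a point in $X/G$, we get a pointed precompact class $\mathcal M(n,D,\Omega)$. Second, note that $X$ is a proper geodesic space, and if $X$ is additionally simply-connected we are exactly in the setting of Theorem \ref{thm:separated-subgroup} applied with this $\mathcal M$; it then yields a finite-index $\Gamma\le G$ with $\diam(X/\Gamma)\le D'(n,D,\mathcal M)=D'(n,D,\Omega)$ and $d(gp,p)\ge D$ for all $g\in\Gamma\setminus\{e\}$, which is the conclusion. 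Third — and this is the point that needs care — the statement we are asked to prove does \emph{not} assume $X$ is simply-connected, whereas Theorem \ref{thm:separated-subgroup} does. I would handle this by passing to the universal cover $\widetilde X\to X$: the group $G$ lifts to a group $\widetilde G\le\iso(\widetilde X)$ which is an extension of $G$ by $\pi_1(X)$, still acts discretely and cocompactly with $\diam(\widetilde X/\widetilde G)=\diam(X/G)\le D$, and $\widetilde X$ is simply-connected. One then needs $\widetilde G$ to again be (virtually) $\mathbb Z^n$ with controlled asymptotic volume; this follows because a group acting discretely cocompactly on a space GH-close to a normed $\mathbb R^n$ must be virtually $\mathbb Z^n$, and a finite-index subgroup of it is genuinely $\mathbb Z^n$, with covolume still controlled. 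Passing the resulting separated finite-index subgroup back down to $X$ gives a finite-index subgroup of $G$ with the desired properties, after possibly shrinking the index further.

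The main obstacle I anticipate is precisely the interface with the simply-connected hypothesis: one must verify that the basepoint-displacement lower bound $d(gp,p)\ge D$ survives the passage between $X$ and $\widetilde X$ and between a finite-index subgroup and the original group, and that all the constants remain functions of $n,D,\Omega$ only. If one prefers to avoid the covering-space argument entirely, the cleaner route is to invoke the Cerocchi--Sambusetti comparison directly as a \emph{quantitative} statement — that the displacement function $g\mapsto d(gp,p)$ on $G\cong\mathbb Z^n$ is bi-Lipschitz to a norm on $\mathbb R^n$ with bi-Lipschitz constant depending only on $n,D,\Omega$ — and then run the elementary lattice argument: among all finite-index sublattices $\Gamma\le\mathbb Z^n$ generated by vectors of norm $\le 2D$, one can always find one whose shortest nonzero vector has norm $\ge D$ while keeping the covolume, hence $\diam(X/\Gamma)$, bounded. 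In outline this reduces the theorem to a purely Euclidean-geometry-of-numbers lemma plus the cited metric comparison, with no appeal to Theorem \ref{thm:separated-subgroup} at all; I would present whichever of the two arguments the earlier machinery makes shortest, flagging that the precompactness-based derivation is the most economical given that Theorem \ref{thm:separated-subgroup} is already available.
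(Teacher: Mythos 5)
Your first (preferred) route, through Theorem~\ref{thm:separated-subgroup}, has a genuine gap at exactly the point you flagged, and the fix you propose does not work. Theorem~\ref{thm:separated-subgroup} requires the spaces in $\mathcal{M}$ to be simply-connected, while the Cerocchi--Sambusetti statement imposes no restriction on $\pi_1(X)$. Passing to the universal cover $\widetilde{X}\to X$ does not restore the hypotheses: the Cerocchi--Sambusetti comparison says $X$ (not $\widetilde{X}$) is GH-close to a normed $\mathbb{R}^n$, so your claim that the lifted group $\widetilde{G}$ must be virtually $\mathbb{Z}^n$ has no basis. Indeed, take $X=\Sigma\times\mathbb{R}$ with $\Sigma$ a small closed hyperbolic surface and $G=\mathbb{Z}$ acting by translation in the $\mathbb{R}$-factor, with translation length bounded above and below; then $\diam(X/G)\leq D$ and $\omega(G,d)\leq\Omega$ are both controlled, $X$ is GH-close to $\mathbb{R}$, but $\widetilde{X}=\mathbb{H}^2\times\mathbb{R}$ and $\widetilde{G}$ contains a surface group, which is not virtually abelian. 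So the covering argument cannot bring you back into the scope of Theorem~\ref{thm:separated-subgroup}, and the precompactness you would get for the $X$'s themselves is useless because they need not be simply-connected.

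Your second, fallback route is the one the paper actually takes, and it works, so you should present that one. One correction: the Cerocchi--Sambusetti comparison \cite[Theorem 1.1]{cerocchi-sambusetti} is an \emph{additive} estimate, $\lvert d(gp,p)-\Vert g\Vert_{st}\rvert\leq C(n,D,\Omega)$, not a bi-Lipschitz one; near the identity the multiplicative distortion is uncontrolled, so you cannot literally say the displacement function is bi-Lipschitz to a norm with constant depending only on $n,D,\Omega$. The paper's workaround is precisely what makes the lattice argument run: after normalizing $\Vert\cdot\Vert_{st}$ with John's theorem, it chooses an orthogonal frame of length $M=2n^3(2D+C)$ (large compared to the additive error), approximates it by lattice vectors $g_1,\dots,g_n$, and takes $\Gamma=\langle g_1,\dots,g_n\rangle$. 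Because $M$ dominates the additive error, every nonzero element of $\Gamma$ has $\Vert\cdot\Vert_{st}$-norm at least $D+C$ (hence displacement at least $D$), while the covering radius of $\Gamma$ in $(\mathbb{R}^n,\Vert\cdot\Vert_{st})$ is bounded in terms of $n,D,\Omega$, which bounds $\diam(X/\Gamma)$. So your generators are long (comparable to $M$), not of norm $\leq 2D$ as you wrote; the set $\{g:d(gp,p)\leq 2D\}$ is only used to guarantee that lattice points come within distance $n(2D+C)$ of any point of $\mathbb{R}^n$. With these adjustments your second outline matches the paper's proof.
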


    \begin{proof}
        Pick $p \in X$ and identify $G$ with $\mathbb{Z}^n$.  
        Since the set 
        \[   S : =   \{ g \in \mathbb{Z}^n \vert d (gp,p) \leq 2D \}               \]
        generates $\mathbb{Z}^n$, it also spans $\mathbb{R}^n$. By Theorem \ref{thm:cs-volume}, there is $C(n, D, \Omega) > 0 $ for which \eqref{eq:cs}  holds for all $g \in \mathbb{Z}^n$. In particular, 
        \[     \Vert g \Vert _{st} \leq 2D + C                    \]
        for each $g \in S$.  Hence for any $\vv \in \mathbb{R}^n$, there is $g \in \mathbb{Z}^n$ with $\Vert \vv - g \Vert_{st} \leq n (2D + C )  $.         By John's Theorem \cite[Theorem 3.3]{milman-schechtman}, there is a linear isomorphism $\alpha : \mathbb{R}^n \to \mathbb{R}^n$ with 
        \[    \frac{\Vert \vv \Vert _2}{\sqrt{n}} \leq \Vert \alpha \vv \Vert _{st} \leq \Vert \vv \Vert _2      \]
        for all $\vv \in \mathbb{R}^n$. Pick $\{ \vv_1 , \ldots , \vv_n \} \subset \mathbb{R}^n$ an $\Vert \cdot \Vert _2$-orthogonal basis with  $\Vert \vv _i \Vert_2 = M$ for each $i$, with $M > 0 $ to be chosen later.  Then there are $g_1, \ldots , g_n \in \mathbb{Z}^n$ with 
        \begin{equation}\label{eq:good-gi}
        \Vert g_i - \alpha \vv_i \Vert _{st} \leq n (2D + C)    
        \end{equation}
        for each $i$. A simple calculation shows that if $M  =  2 n^3 (2D + C) $, then the set $\{ g_1, \ldots , g_n \}$ is linearly independent, and for any $g \in \langle g_1, \ldots, g_n \rangle \backslash \{ e \}$ one has 
        \[    \Vert \alpha ^{-1}  g \Vert _2 \geq  D + C .                      \]        
        Set $\Gamma : = \langle g_1, \ldots , g_n \rangle $. Then for any $g \in \Gamma \backslash \{ e \}$ one has 
        \[            d( gp,p)  \geq  \Vert g \Vert _{st} - C  \geq    \Vert \alpha ^{-1} g \Vert_2  - C   \geq  D .
        \]
        On the other hand, by \eqref{eq:good-gi} one has 
        \[  \Vert g_i \Vert _{st} \leq \Vert \alpha \vv_i \Vert _{st} + n (2D + C) \leq  M + n (2D + C)          \]
        for each $i$, so for any  $\vv \in \mathbb{R}^n$, there is $g \in \Gamma$ with 
        \[    \Vert \vv - g \Vert _{st} \leq Mn + n^2 (2D + C) .                  \]
        For any $x \in X$, there is $\vv \in \mathbb{Z}^n $ with $d(\vv p , x) \leq D$, and by the above observation,  $g \in \Gamma $ with 
        \begin{eqnarray*}
            d(gp,x) & \leq & d (gp,\vv p) + D \\
            & \leq & \Vert \vv - g \Vert _{st} + C + D \\
            & \leq & Mn + (n ^2 + 1 ) (2D + C). 
        \end{eqnarray*}
         Therefore,  if $D' (n, D, \Omega) : = 2 Mn + 2(n ^2 + 1 ) (2D + C)$, then 
        \[    \diam (X/ \Gamma) \leq D'.           \]
        \end{proof}

    When $M$ has almost non-negative Ricci curvature, Theorem \ref{thm:torus} follows from the work of Colding  \cite{colding}. However, their approach is substantially different as they first show that the universal cover $\tilde{M}$ is everywhere almost Euclidean \cite[Lemma 3.5]{colding}, a condition that does not hold under an arbitrary Ricci curvature lower bound. Note that they work under the weaker hypothesis that $M$ has first Betti number equal to $n$, rather than being homeomorphic to the $n$-dimensional torus. Nevertheless, they ultimately show that in the presence of almost non-negative Ricci curvature,  for $n \neq 3$, these two hypotheses are equivalent \cite[Theorem 0.2]{colding} (in dimension $n=3$, this equivalence was later obtained as a consequence of  Perelman's solution to the Geometrization Conjecture).

        Recently, Rong generalized Theorem \ref{thm:bns} to nilmanifolds \cite{rong} with a different proof strategy that does not rely on Theorem \ref{thm:torus}. Certainly, Proposition \ref{pro:zhu} shows that a result analogous to Theorem \ref{thm:torus}  would not hold for nilmanifolds.

	\section{Preliminaries}\label{sec:prelim}

	\subsection{Group theory} Given any set $S$, we denote by $F_S$ the free group with letters the elements of $S$. In a group $G$, we say a set $S \subset G$ is \textit{symmetric} if it contains the identity and for all $s \in S $ one has $s^{-1} \in S$.
	\begin{Def}
		Let $G$ be a group and $S \subset G$ a symmetric generating set. We say $S$ is a  \textit{defining set} if $G$ is presented with $S$ as generating set and $\{ abc^{-1} \in F_S  \, \vert  \, a, b, c \in S , ab= c \} $ as set of relations. 
	\end{Def}
	
	A given group can admit several different-looking presentations, but the ones produced using defining sets are  particularly nice.  It turns out that finitely presented groups always admit finite defining sets \cite[Theorem 7.A.8]{cornulier}.
	
	\begin{Thm}\label{thm:defining}
		Let $G$ be a group and $S\subset G$ a symmetric generating set. If $G $ admits a presentation $G  = \langle S \vert R \rangle$ with $R$ consisting of words of length $\leq \ell$, then $ S^{\lfloor \frac{\ell + 2 }{3}   \rfloor }$ is a defining set. 
	\end{Thm}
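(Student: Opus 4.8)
The plan is to show that the natural epimorphism from the group $\widehat G$ presented by the candidate defining set onto $G$ is in fact an isomorphism. Write $k := \lfloor \tfrac{\ell+2}{3}\rfloor$ and $S' := S^k$. Since $S$ is symmetric it contains $e$, so $S^j\subseteq S^k$ for $j\le k$, $S'$ is again symmetric, and (as $\ell\ge 1$ forces $k\ge1$) $S\subseteq S'$; a short computation gives $3k\ge\ell$. Let
\[
\widehat G:=\bigl\langle\, (y_a)_{a\in S'} \ \bigm|\ y_ay_b=y_c \ \ (a,b,c\in S',\ ab=c \text{ in } G)\,\bigr\rangle,
\]
and let $\pi\colon\widehat G\to G$ be the homomorphism $y_a\mapsto a$; it is well defined because the listed relations hold in $G$, and surjective because $S\subseteq S'$ generates $G$.

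First I would collect the elementary consequences of the relations inside $\widehat G$. From $y_ey_e=y_e$ we get $y_e=e$, and then from $y_ay_{a^{-1}}=y_e$ (legitimate since $a^{-1}\in S'$) we get $y_{a^{-1}}=y_a^{-1}$ for every $a\in S'$. Next, for $a=s_1\cdots s_j\in S'$ with $s_i\in S$ and $j\le k$, each prefix $s_1\cdots s_i$ is a product of $\le k$ elements of $S$, hence lies in $S'$, so iterating the defining relations gives $y_{s_1}\cdots y_{s_j}=y_a$ in $\widehat G$. In particular $\{y_s:s\in S\}$ generates $\widehat G$, and the homomorphism $\rho\colon F_S\to\widehat G$, $s\mapsto y_s$, is surjective.

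The key step is to verify that $\rho$ annihilates every relator $r=s_1\cdots s_m\in R$ — so $m\le\ell\le 3k$ and $s_1\cdots s_m=e$ in $G$ — which then lets $\rho$ descend to a homomorphism $\bar\rho\colon G\to\widehat G$. Here I would split $r$ into three consecutive blocks of lengths $\le k$, which is possible since $m\le 3k$ (allowing empty blocks), with corresponding group elements $a,b,c\in S'$. By the identity of the previous paragraph applied within each block, $\rho(r)=y_ay_by_c$. Now $abc=e$ in $G$, so $ab=c^{-1}$; and although $ab$ need not be a word of length $\le k$, as a \emph{group element} it equals $c^{-1}$, which lies in $S'$ because $c\in S'$ and $S'$ is symmetric. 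Hence the relation $y_ay_b=y_{ab}=y_{c^{-1}}$ is available, and together with $y_{c^{-1}}=y_c^{-1}$ it yields $\rho(r)=y_c^{-1}y_c=e$. I expect this pivot — that $ab$, a priori a word of length $2k$, coincides as a group element with the short element $c^{-1}\in S'$ — to be the only genuinely non-mechanical point of the argument, and it is exactly what makes the three-block split, hence the exponent $\lfloor\tfrac{\ell+2}{3}\rfloor$, suffice; it uses symmetry of $S$ essentially.

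Finally I would check that $\pi$ and $\bar\rho$ are mutually inverse. Since $\bar\rho(s)=y_s$ and $\pi(y_s)=s$, the composite $\pi\circ\bar\rho$ fixes the generating set $S$ of $G$, so $\pi\circ\bar\rho=\mathrm{id}_G$. Conversely, for $a=s_1\cdots s_j\in S'$ one has $\bar\rho(\pi(y_a))=\bar\rho(a)=y_{s_1}\cdots y_{s_j}=y_a$ by the prefix-combining identity, and since the $y_a$ generate $\widehat G$ this gives $\bar\rho\circ\pi=\mathrm{id}_{\widehat G}$. Thus $\pi$ is an isomorphism, i.e.\ $S'=S^{\lfloor (\ell+2)/3\rfloor}$ is a defining set for $G$.
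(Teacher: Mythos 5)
Your proof is correct. Note that the paper itself does not prove this statement at all: it is quoted from the literature, with a citation to Cornulier--de la Harpe (Theorem 7.A.8), so there is no internal proof to compare against; your argument is in the spirit of the standard one, with the key point being exactly the pivot you isolate --- after splitting a relator of length $\le \ell \le 3k$ into three blocks whose values $a,b,c$ lie in $S^k$, the product $ab$, though a word of length up to $2k$, equals the group element $c^{-1}\in S^k$ by symmetry, so the triangle relation $y_ay_b=y_{c^{-1}}$ applies. Constructing the inverse map $\bar\rho\colon G\to\widehat G$ and checking it on generators is a clean way to conclude, and your prefix-combining identity (using $e\in S$ so that $S^j\subseteq S^k$ for $j\le k$) is exactly what is needed both to kill the relators and to verify $\bar\rho\circ\pi=\mathrm{id}$. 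Two very minor points you leave implicit: relators in $F_S$ may contain formal inverse letters $s^{-1}$, but since $S$ is symmetric these are converted to positive letters via your identity $y_{s^{-1}}=y_s^{-1}$, so assuming positive words of the same length is harmless; and the case $\ell=0$ cannot occur (the letter $e\in S$ is always a relator of length $1$), so $k\ge 1$ and $S\subseteq S^k$ as you use.
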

	
	The following is part of the well known Milnor--\v{S}varc Lemma (see \cite{de-la-harpe} for example).

	\begin{Lem}\label{lem:generators}
		Let $(X,p)$ be a pointed proper geodesic space and $G \leq \iso (X)$ a co-compact group of isometries. If $R > 2 \cdot  \diam (X/ G)$, then 
		\[     S : = \{ g \in G \vert   d (gp,p) < R  \}                 \]
		is a symmetric pre-compact generating set of $G$. In particular, $G$ is compactly generated.
	\end{Lem}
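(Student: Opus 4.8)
The plan is to verify the three asserted properties of $S$ in turn; writing $D_0 := \diam(X/G)$, the hypothesis reads $R > 2D_0$, and only the generation statement requires real work. Symmetry is immediate from the fact that the elements of $G$ are isometries: $e \in S$ since it fixes $p$, and if $g \in S$ then $d(g^{-1}p, p) = d(p,gp) = d(gp,p) < R$, so $g^{-1} \in S$.

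For pre-compactness I would use that $X$ is proper, so its closed balls are compact. Every $g \in S$ is $1$-Lipschitz and carries a fixed compact set $K \subset X$ into the compact ball about $p$ of radius $\sup_{y \in K} d(y,p) + R$; hence, by the Arzel\`{a}--Ascoli theorem applied over an exhaustion of $X$ by compact sets (together with a diagonal argument), $S$ is pre-compact in $\iso(X)$ for the topology of uniform convergence on compacta, and therefore pre-compact in $G$. In particular $\overline{S}$ is compact and contains $S$, so once generation is established the compact generation of $G$ follows.

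The crux is to show $\langle S \rangle = G$, for which I would run the classical chaining argument. Fix $g \in G$ and set $L := d(p, gp)$; if $L = 0$ then $g \in S$, so assume $L > 0$. Since $X$ is geodesic, choose a unit-speed geodesic from $p$ to $gp$ and subdivide it into points $p = x_0, x_1, \dots, x_k = gp$ with each consecutive distance $d(x_i, x_{i+1}) < R - 2D_0$, which is legitimate because $R - 2D_0 > 0$ and $L < \infty$. Because $D_0$ bounds the distance from any point of $X$ to the orbit $Gp$, pick $h_i \in G$ with $d(h_i p, x_i) \le D_0$, anchoring $h_0 := e$ and $h_k := g$. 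Then for each $i$ the triangle inequality gives $d(h_i^{-1} h_{i+1} p, p) = d(h_{i+1}p, h_i p) \le D_0 + d(x_i, x_{i+1}) + D_0 < R$, so $s_i := h_i^{-1}h_{i+1} \in S$; telescoping, $g = h_0^{-1}h_k = s_0 s_1 \cdots s_{k-1} \in \langle S \rangle$.

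I do not expect a genuine obstacle here, as this is nothing more than the standard Milnor--\v{S}varc argument. The two points that call for a little care are the pre-compactness step, where one must combine Arzel\`{a}--Ascoli with the properness of $X$ (and a diagonal argument when $X$ is noncompact), and the bookkeeping at the ends of the chain: anchoring $h_0 = e$ and $h_k = g$ is precisely what makes the telescoping product equal $g$ itself rather than $g$ composed with an element of the stabilizer of $p$.
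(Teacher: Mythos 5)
Your proof is correct and follows exactly the standard Milnor--\v{S}varc chaining argument; the paper itself offers no proof of this lemma (it simply cites de la Harpe), so there is nothing different in approach to compare. The only nitpick is that $\inf_{h\in G} d(hp,x_i)\le \diam(X/G)$ need not be attained (the orbit need not be closed), so you should pick $h_i$ with $d(h_ip,x_i)<D_0+\varepsilon$ and subdivide with step $<R-2D_0-2\varepsilon$, which the strict inequality $R>2\diam(X/G)$ permits; similarly, pre-compactness of $S$ is naturally asserted in $\iso(X)$ (or in $G$ when $G$ is closed), as your Arzel\`a--Ascoli argument shows.
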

	
	The following result is due to Pontrjagin \cite[Section 35]{pontrjagin} (see also \cite[Lemma 2.4.2]{rudin}). 
	
	\begin{Lem}\label{lem:lattice}
		Any locally compact compactly generated Hausdorff abelian group admits a discrete co-compact subgroup isomorphic to $\mathbb{Z}^m$ for some $m \in \mathbb{N}$. 
	\end{Lem}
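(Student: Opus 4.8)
The plan is to deduce the statement from the structure theory of locally compact abelian groups. The cleanest route uses the full structure theorem for compactly generated locally compact Hausdorff abelian groups, which is exactly the cited result of Pontrjagin: such a group $G$ is topologically isomorphic to $\mathbb{R}^a \times \mathbb{Z}^b \times K$ for some integers $a, b \geq 0$ and some compact abelian group $K$. I would take this decomposition as the input and reduce the lemma to a verification.

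Granting $G \cong \mathbb{R}^a \times \mathbb{Z}^b \times K$, I set $m := a + b$ and let $\Lambda \leq G$ be the subgroup corresponding to $\mathbb{Z}^a \times \mathbb{Z}^b \times \{e\}$, which is abstractly isomorphic to $\mathbb{Z}^m$. To see that $\Lambda$ is discrete, observe that for a ball $B \subset \mathbb{R}^a$ of radius less than $1$ about the origin, the open set $B \times \{0\} \times K$ meets $\Lambda$ only in $\{(0,0,e)\}$; translating by elements of $\Lambda$ shows every point of $\Lambda$ is isolated, so the subspace topology on $\Lambda$ is discrete (the compact factor $K$ is irrelevant here, since $\Lambda$ is trivial in that coordinate). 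To see that $\Lambda$ is co-compact, note that the quotient map identifies $G/\Lambda$ with $(\mathbb{R}^a/\mathbb{Z}^a) \times (\mathbb{Z}^b/\mathbb{Z}^b) \times K$, which is a product of a torus, a point, and a compact group, hence compact. (When $G$ is itself compact one has $a = b = 0$ and the trivial subgroup already does the job.)

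If one prefers to avoid invoking the full decomposition, I would instead use only the weaker classical fact that $G$ contains an open subgroup $G_0 \cong \mathbb{R}^a \times K$ with $K$ compact. Since $G_0$ is open it is closed, so $G/G_0$ is discrete; since $G$ is compactly generated, $G/G_0$ is finitely generated, hence isomorphic to $\mathbb{Z}^b \oplus F$ with $F$ finite. Composing the projection $G \to G/G_0$ with the projection onto the free part yields a continuous surjection $\phi : G \to \mathbb{Z}^b$ onto a discrete group, and since $\mathbb{Z}^b$ is free this splits; writing $N := \ker \phi$, one checks that $G$ is topologically isomorphic to $N \times \mathbb{Z}^b$ and that $G_0$ sits inside $N$ as an open subgroup of finite index $|F|$. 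Then the subgroup $\mathbb{Z}^a \times \{e\} \leq \mathbb{R}^a \times K = G_0$ is discrete in $N$ (discreteness is a local property) and co-compact (a compact fundamental domain for it in $G_0$, together with finitely many coset representatives of $G_0$ in $N$, gives a compact set surjecting onto $N/(\mathbb{Z}^a \times \{e\})$), so $(\mathbb{Z}^a \times \{e\}) \times \mathbb{Z}^b \cong \mathbb{Z}^{a+b}$ is the desired lattice in $G$. Either way, the only genuinely nontrivial ingredient is the existence of the Euclidean-by-compact open subgroup, supplied by the cited works; the remaining obstacle — and the only place a hasty argument might slip — is the careful bookkeeping with subspace and quotient topologies, which is why I would spell out the discreteness and co-compactness checks explicitly.
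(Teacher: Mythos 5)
Your proposal is correct, and it matches the paper's treatment: the paper offers no proof of this lemma, citing Pontrjagin and Rudin, and your first argument is exactly the standard deduction from the structure theorem $G \cong \mathbb{R}^a \times \mathbb{Z}^b \times K$ contained in those references, with the discreteness and co-compactness verifications carried out correctly (and the compact case $m=0$ handled). Your alternative, more self-contained route via an open subgroup $G_0 \cong \mathbb{R}^a \times K$ and the finitely generated discrete quotient $G/G_0$ is also sound, just a rearrangement of the same classical ingredients.
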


	\subsection{Gromov--Hausdorff convergence} 
	
	We assume the reader is familiar with pointed and equivariant Gromov--Hausdorff convergence, introduced in \cite[Section 6]{gromov-polynomial} and \cite[Chapter 1]{fukaya}, respectively. 
	One of the main features of this framework is that once one has pointed Gromov--Hausdorff convergence, after passing to a subsequence one can promote it to  equivariant Gromov--Hausdorff convergence  \cite[Proposition 3.6]{fukaya-yamaguchi}.
	
	\begin{Thm}[Fukaya--Yamaguchi]\label{thm:fukaya-yamaguchi}
		Let  $(X_i,p_i )$ be a sequence of pointed proper geodesic spaces that converges in the pointed Gromov--Hausdorff sense to a space $(X,p)$, and $G_i \leq \iso (X_i)$ a sequence of closed groups of isometries. Then after passing to a subsequence, there is $G \leq \iso (X)$ such that the triples $(X_i,p_i,G_i)$ converge to the triple $(X,p,G)$ in the equivariant Gromov--Hausdorff sense. 
	\end{Thm}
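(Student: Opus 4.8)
The plan is to realize all the spaces inside one ambient metric space, extract limits of isometries by an Arzel\`a--Ascoli argument, and then check that the resulting set of limit isometries is a closed subgroup for which the convergence is equivariant.

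First I would invoke the standard reformulation of pointed Gromov--Hausdorff convergence \cite{gromov-polynomial}: there is a proper metric space $(Z,d_Z)$ and isometric embeddings $X_i \hookrightarrow Z$, $X \hookrightarrow Z$ with $p_i \to p$ in $Z$ and with $X_i \to X$ in the Hausdorff sense on every bounded subset of $Z$. Each $g \in G_i$ is a surjective $1$-Lipschitz self-map of $X_i$, hence a $1$-Lipschitz partial map into $Z$. The key elementary observation is that a sequence $g_i \in G_i$ with $\sup_i d(g_i p_i,p_i) < \infty$ is equicontinuous and stays bounded on bounded sets, so by Arzel\`a--Ascoli a subsequence converges, uniformly on every bounded subset of $X$, to a map $g\colon X\to X$; applying the same to $g_i^{-1}\in G_i$ (which has the same displacement) and using $X_i\to X$ in Hausdorff sense shows $g\in\iso(X)$.

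Next I would diagonalize over scales. For each $k\in\mathbb N$ put
\[
\Sigma_i^k := \{\, g\in G_i : d(g p_i,p_i)\le k \,\}.
\]
By the previous paragraph and precompactness of $1$-Lipschitz families, after passing to a subsequence (depending on $k$) the sets $\Sigma_i^k$ converge, in the Hausdorff sense on bounded subsets of the space of partial isometries of $Z$, to a closed set $\Sigma^k\subseteq\iso(X)$ of isometries of displacement $\le k$. A diagonal extraction over $k$ produces a single subsequence, still denoted $(X_i,p_i,G_i)$, along which $\Sigma_i^k\to\Sigma^k$ for all $k$. Set $G:=\overline{\bigcup_{k\in\mathbb N}\Sigma^k}\subseteq\iso(X)$. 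Then $G$ is closed and contains $e$; it is closed under inversion since $g\in\Sigma_i^k\iff g^{-1}\in\Sigma_i^k$, so $(\Sigma^k)^{-1}=\Sigma^k$; and it is closed under products since if $g_i\to g$, $h_i\to h$ uniformly on bounded sets with $d(g_ip_i,p_i),d(h_ip_i,p_i)\le k$, then $d(g_ih_i\,p_i,p_i)\le 2k$ and $g_ih_i\to gh$ uniformly on bounded sets, whence $gh\in\Sigma^{2k}$. Continuity of the group operations on $G$ is inherited from $\iso(X)$, so $G\leq\iso(X)$ is a closed subgroup.

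It remains to check that the convergence is equivariant. Given $R,\varepsilon>0$ and $k\in\mathbb N$, take a pointed $\varepsilon$-approximation $\varphi_i\colon B(p_i,R+k)\to X$ coming from the ambient embeddings, and use $\Sigma_i^k\to\Sigma^k$ to build maps $\psi_i\colon\Sigma_i^k\to\Sigma^k$ and $\bar\psi_i\colon\Sigma^k\to\Sigma_i^k$ that move each isometry by less than $\varepsilon$ in the sup-metric on $B(p_i,R)$, resp. $B(p,R)$; a short estimate gives $d\bigl(\varphi_i(gx),(\psi_i g)(\varphi_i x)\bigr)<3\varepsilon$ for $g\in\Sigma_i^k$, $x\in B(p_i,R)$, and symmetrically for $\bar\psi_i$. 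Since every element of $G$ is approximated by elements of some $\Sigma^k$, which are approximated by elements of $\Sigma_i^k\subseteq G_i$, letting $R,k\to\infty$ and $\varepsilon\to 0$ along the chosen subsequence yields $(X_i,p_i,G_i)\to(X,p,G)$ in the equivariant Gromov--Hausdorff sense. The step I expect to be the main obstacle is precisely this last bookkeeping: arranging a single subsequence that handles all scales $R$ at once, and producing honest, mutually almost-inverse maps $\psi_i,\bar\psi_i$ between the truncated groups with the correct almost-equivariance; the analytic input (Arzel\`a--Ascoli and subadditivity of displacement under composition) is routine, but matching it to the precise definition of equivariant Gromov--Hausdorff convergence requires care.
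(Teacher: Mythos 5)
This theorem is not proved in the paper at all; it is quoted as \cite[Proposition 3.6]{fukaya-yamaguchi}, and your sketch is essentially the standard argument given there: realize the convergence in a common ambient space, use the $1$-Lipschitz/Arzel\`a--Ascoli precompactness of isometries with bounded displacement, diagonalize over displacement scales and radii, and verify that the set of limit isometries is a closed subgroup for which the convergence is equivariant. Your outline is correct, with the only remaining work being the bookkeeping you already identify (a single diagonal subsequence over all $k$ and $R$, and the almost-equivariant, almost-inverse maps required by the definition of equivariant Gromov--Hausdorff convergence), so I have nothing substantive to add.
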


	Another feature of  equivariant Gromov--Hausdorff convergence is  its compatibility with taking quotients. This follows directly from \cite[Theorem 2.1]{fukaya}.

	\begin{Thm}[Fukaya]\label{thm:fukaya}
		Let $X_i$ be a sequence of proper geodesic spaces, $p_i \in X_i$, and $G_i \leq \iso (X_i)$. If the sequence of triples     $(X_i,p_i, G_i)$ converges in the equivariant Gromov--Hausdorff sense to a triple $(X,p,G)$, then the sequence of quotients  $(X_i / G_i , [p_i])$ converges in the pointed Gromov--Hausdorff sense to $(X/G, [p])$.  
	\end{Thm}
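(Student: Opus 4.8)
The statement is a special case of Fukaya's theorem on convergence of quotients \cite[Theorem 2.1]{fukaya}, so one option is simply to cite it; I will instead sketch the direct argument, which is exactly the content of that reduction. We may assume the $G_i$ and $G$ are closed, since replacing a subgroup of $\iso(Y)$ by its closure changes neither its orbit closures nor the quotient. For a proper geodesic space $Y$ with a closed $H \le \iso(Y)$, the action is proper, orbits are closed, and $Y/H$ is a proper geodesic space whose metric is $d_{Y/H}([y],[y']) = \inf_{h \in H} d(y, hy')$, the infimum being attained. The plan is to push the equivariant approximating maps down to these quotients and verify the definition of pointed Gromov--Hausdorff convergence directly.

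Fix a large radius $\rho$ and $\varepsilon > 0$. By the definition of equivariant Gromov--Hausdorff convergence, for all sufficiently large $i$ there is an $\varepsilon$-isometry $f_i \colon B_\rho(p_i) \to B_\rho(p)$ with $f_i(p_i) = p$, together with maps $\phi_i$ and $\psi_i$ between the ``$\rho$-balls'' $\{g \in G_i : d(gp_i,p_i) \le \rho\}$ and $\{g \in G : d(gp,p) \le \rho\}$ of the two groups, which are almost mutually inverse and almost-equivariant with $f_i$, in the sense that $d(f_i(gx), \phi_i(g)f_i(x)) \le \varepsilon$ and $d(f_i(\psi_i(g)x), gf_i(x)) \le \varepsilon$ whenever both sides are defined. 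I would then define the approximating map $\bar f_i$ on $B_{\rho/10}([p_i]) \subset X_i/G_i$ by $\bar f_i([x]) := [f_i(\widetilde x)]$, where $\widetilde x$ is the lift of $[x]$ closest to $p_i$, so that $d(\widetilde x, p_i) = d([x],[p_i])$; note $\bar f_i([p_i]) = [p]$.

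To see that $\bar f_i$ is an $O(\varepsilon)$-isometry with $\varepsilon$-dense image, take $[x],[y]$ in the domain and let $g^\ast \in G$ attain $d_{X/G}(\bar f_i[x], \bar f_i[y]) = d(f_i\widetilde x, g^\ast f_i\widetilde y)$. This distance is at most $d(f_i\widetilde x, f_i\widetilde y) \le d(\widetilde x,\widetilde y) + \varepsilon$, which is bounded in terms of $\rho$, so a triangle inequality shows $d(g^\ast p, p)$ is small compared to $\rho$; hence $g^\ast$ lies in the domain of $\psi_i$, and almost-equivariance together with the almost-isometry property of $f_i$ give $d(\widetilde x, \psi_i(g^\ast)\widetilde y) \le d(f_i\widetilde x, g^\ast f_i\widetilde y) + O(\varepsilon)$, so $d_{X_i/G_i}([x],[y]) \le d_{X/G}(\bar f_i[x], \bar f_i[y]) + O(\varepsilon)$. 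The reverse inequality is symmetric, using a minimizer in $G_i$, which is likewise forced to move $p_i$ boundedly, together with $\phi_i$. For $\varepsilon$-density, given $[z]$ near $[p]$ take the lift $z$ with $d(z,p) = d([z],[p])$, use that $f_i$ has $\varepsilon$-dense image to find $\widetilde z \in B_\rho(p_i)$ with $d(f_i\widetilde z, z) < \varepsilon$, and observe $d_{X/G}(\bar f_i[\widetilde z], [z]) \le d(f_i\widetilde z, z) < \varepsilon$. Letting $\varepsilon \to 0$ and $\rho \to \infty$ along a diagonal sequence then gives $\pghd\big((X_i/G_i, [p_i]), (X/G, [p])\big) \to 0$.

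The only genuine obstacle is conceptual rather than technical: the quotient distances are infima over the possibly non-compact groups $G_i$ and $G$, whereas the equivariant approximation data controls only their bounded ``$\rho$-balls''. The resolution is the elementary remark above — a minimizer of $\inf_g d(x,gy)$ must move the basepoint by an amount bounded in terms of the distances involved, since $\inf_g d(x,gy) \le d(x,y)$ — so that the bounded-scale information is enough. Everything else is the standard bookkeeping of nested radii and accumulated errors in a Gromov--Hausdorff approximation.
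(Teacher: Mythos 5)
Your proposal is correct, but it takes a different route from the paper: the paper gives no argument at all for this statement, simply attributing it to Fukaya and deriving it directly from \cite[Theorem 2.1]{fukaya} (the option you mention and then set aside), whereas you verify the pointed convergence of the quotients by hand from the definition of equivariant Gromov--Hausdorff convergence. The citation buys brevity at the cost of invoking Fukaya's general convergence machinery; your direct argument is elementary and self-contained, and it correctly isolates the only real issue, namely that the quotient distances are infima over possibly unbounded groups while the equivariant approximation data only controls elements displacing the basepoint a bounded amount --- resolved, as you say, by noting that any (near-)minimizer of $\inf_g d(x,gy)$ moves the basepoint by an amount controlled by $d(x,p)$, $d(y,p)$ and $d(x,y)$, hence lies in the domain of $\psi_i$ (respectively $\phi_i$). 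Two points of bookkeeping deserve a word, though neither is a gap: in the density step, $\bar f_i[\tilde z]$ is computed from the lift of $[\tilde z]$ closest to $p_i$, which need not be $\tilde z$ itself, and since $f_i$ is only almost equivariant the classes $[f_i\tilde z]$ and $\bar f_i[\tilde z]$ coincide only up to $O(\varepsilon)$ (apply $\phi_i$ to the element of $G_i$ relating the two lifts, which again moves $p_i$ boundedly, so the equivariant data must be taken at a somewhat larger radius than $\rho$ --- exactly the nested-radii bookkeeping you defer); and if one does not insist the groups are closed, the infimum defining the quotient metric need not be attained, but running the same estimates with near-minimizers changes nothing, and in the paper's actual applications the groups are discrete subgroups of the isometry group of a proper space, hence closed.
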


	\section{Construction of covering spaces}\label{sec:covers}
	
	In this section, we present a construction of covering spaces out of group actions, likely of independent interest.  The novelty of this result is that it allows one to take different choices of $B$, $S$, and $T$, rather than being pre-determined by the action (cf. \cite[Appendix A]{fukaya-yamaguchi}, \cite[Section 5]{pan-wang}, \cite[Section 2.5]{zamora}). This flexibility is required for its application to Theorem \ref{thm:main-technical}.

	\begin{Thm}\label{thm:monodromy}
		Let $X$ be a proper geodesic space, $\Gamma  \leq \iso (X)$ a closed group,  $B \subset X$ a connected open set with $\Gamma B = X $,  $S \subset   \Gamma$ a symmetric generating set, and $T := S^M$ for some $M \geq 1$. Assume the following holds:
		\begin{enumerate}[label= \textbf{\roman*}]
			\item For all $s \in S$ we have $sB \cap B \neq \emptyset$.\label{item:c1}
            \item If $t \in T^6$ satisfies $tB \cap B \neq \emptyset$, then $t \in T$. \label{item:c2}
			\item If $g \in \Gamma$ satisfies $gB \cap B \neq \emptyset$, then $gB \subset TB$.\label{item:c3}
		\end{enumerate}
		Let $\tilde{\Gamma}$ be the abstract group generated by $T^3$ with relations 
		\begin{center}
			$ab = c$ in $\tilde{\Gamma}$ whenever $a,b,c \in T^3$, and $ab = c $ in $\Gamma$.
		\end{center}
		If we denote the  canonical embedding $T^3  \hookrightarrow \tilde{\Gamma} $ as $t \mapsto t^{\sharp } $, then there is a unique group morphism $\Phi :  \tilde{\Gamma } \to \Gamma$ that satisfies $\Phi (t^{\sharp }) = t$ for all $t \in T^3$. Equip $ \tilde{\Gamma } $ with the discrete topology, and consider the topological space
		\begin{center}
			$\tilde{X} := \left(  \tilde{\Gamma }  \times B \right) / \sim$,
		\end{center}
		with $\sim$ defined as
        \begin{equation}\label{eq:sim}
		(gt^{\sharp}, x) \sim (g,t x) \, \text{ whenever }\, g \in \tilde{\Gamma}, t \in T,  \text{ and }\,  x ,  tx \in B .
        \end{equation}
		Then
                \begin{itemize}
                    \item $\tilde{X}$ is connected.
                    \item The map $\Psi : \tilde{X} \to X$ given by
		\[ \Psi  (g, x) : = \Phi (g) x    \]
		is a covering map. 
                    \item $\Psi$ is a homeomorphism if and only if 
		\begin{equation}\label{eq:psi-homeo}
			\{ g \in \tilde{ \Gamma }  \, \vert \, \Phi (g) B \cap B \neq \emptyset \} \subset T^{\sharp} . 
		\end{equation}   
                \end{itemize} 
	\end{Thm}

	\begin{Exa}\label{exa:lie}
		Let $\Gamma = X$ be a connected Lie group equipped with a left-invariant metric, $B \subset X$ the exponential of a small ball in the Lie algebra, and 
		\begin{equation}\label{eq:good-s}
			S = T = \{ g \in \Gamma \vert gB \cap B \neq \emptyset \}. 
		\end{equation}
		Then conditions \eqref{item:c1}, \eqref{item:c2}, \eqref{item:c3} follow trivially, and $\tilde{\Gamma} = \tilde{X}$ is the universal cover of $\Gamma$. 
	\end{Exa}

	\begin{Exa}\label{exa:general-good-s}
		Let $X$ be a proper geodesic space, $\Gamma  \leq \iso (X) $ a closed group acting co-compactly with $\diam (X /\Gamma ) = D$, $B \subset X$ an open ball of radius $> D$, and $S = T $ be as in \eqref{eq:good-s}. Then all conditions \eqref{item:c1}, \eqref{item:c2}, \eqref{item:c3} follow trivially and $\Psi : \tilde{X} \to X$ is a regular cover  with Galois group $\kker (\Phi)$ (see \cite[Section 2.5]{zamora}).
	\end{Exa}

	\begin{Exa}\label{ex:irrational-rotation}
		Let $r \in (0,  1/10 ) $ be an irrational number, $X = \mathbb{S}^1 \times \mathbb{R}$,  and 
		\[ B =  \{ (e ^{is} , t ) \in X \, \vert \, s \in ( - r, r ) , t \in \mathbb{R} \}  .   \]
		Set $\Gamma  = \mathbb{Z}$, $S = \{ -1, 0, 1 \}$, $T = \{ -2, -1, 0, 1,2 \}$, with the action given by 
		\[   k \cdot (z,t)  =  \left(  e^{irk} z \, , \,  t   \right)  \text{ for } k \in \Gamma  , z \in \mathbb{S}^1 , t \in \mathbb{R}.                      \]
		This setting satisfies the conditions of the theorem. Then $\tilde{\Gamma} = \mathbb{Z}$,  $\tilde{X} = \mathbb{R}^2$, and $\Psi : \tilde{X} \to X$ is the standard covering map. 
        
        On the other hand, if one were to take 
		\[   B  =  \{    (e ^{is} , t ) \in X \, \vert \, s \in (  - r  \cdot \arcct (t)  ,  \,  r  \cdot  \arcct (t)   ) , \,t \in \mathbb{R}  \}                  \]
		with $\arcct  : \mathbb{R} \to (0, \pi) $, then conditions \eqref{item:c1} and \eqref{item:c2} would hold, but condition \eqref{item:c3} would fail, and $\tilde{X}$ would not be a covering space of $X$ (see Figure \ref{fig:strips}).

		\begin{figure}[h!]
			\psfrag{a}{$B$}
			\psfrag{b}{$\tilde{X}$}
			\includegraphics[scale = 0.5]{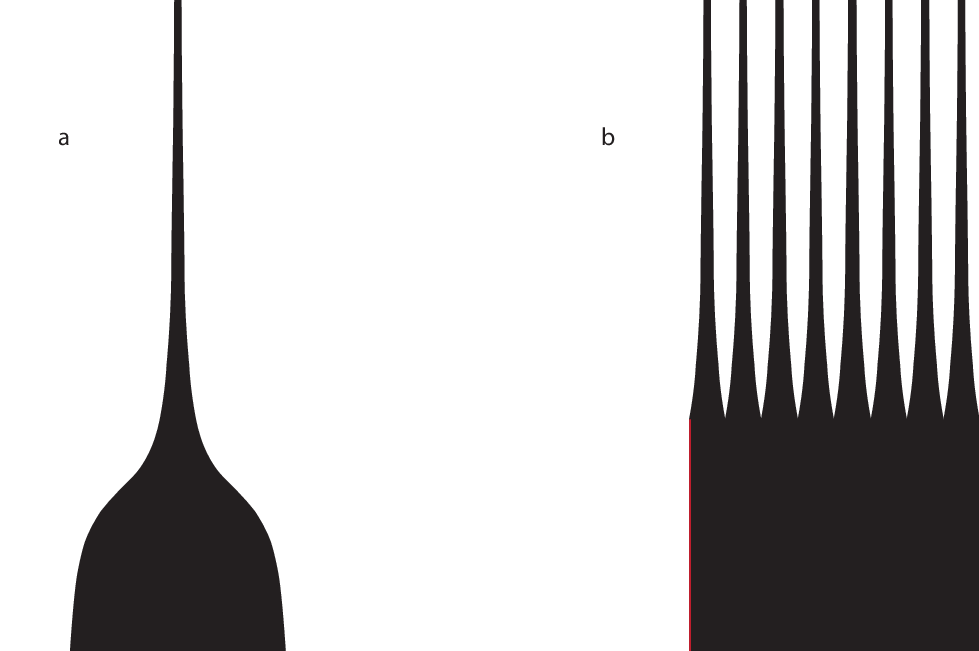}
			\caption{$\tilde{X}$ is obtained from copies of $B$ glued one after the other.}\label{fig:strips}
		\end{figure}

	\end{Exa}

	The proof of Theorem \ref{thm:monodromy} is obtained from a series of lemmas.  

	\begin{Lem}\label{lem:relation-is-good}
	The relation $\sim$  given by \eqref{eq:sim} is an equivalence relation.
	\end{Lem}
	
	\begin{proof}
            Let $(a,x), (b,y ) \in \tilde{\Gamma} \times B$ and assume $(a,x ) \sim (b,y)$. This means there is $t \in T$ with $a= bt^{\sharp}$, $y=tx$. Then $b = a (t^{-1})^{\sharp}$ and $x = t^{-1}y$, so $(b,y) \sim  (a,x)$ and  $\sim$ is symmetric. 
            
            To see that $\sim$ is transitive, assume $(a,x), (b,y ), (c,z) \in \tilde{\Gamma} \times B$ satisfy $(a,x) \sim (b,y) \sim (c,z)$. This means
            \[     a = b t_1 ^{\sharp} = c t_2 ^{\sharp } t_1 ^{\sharp} , \, z = t_2y = t_2 t_1  x                          \]
            for some $t_1, t_2 \in T$. By \eqref{item:c2}, we have $t_2t_1 \in T$, so $(t_2t_1)^{\sharp} = t_2 ^{\sharp}t_1 ^{\sharp}$, and 
            \[ a = c (t_2t_1)^{\sharp},\,  z = (t_2t_1)x ,\] 
            meaning that $(a,x) \sim (c,z)$. 
	\end{proof}

	\begin{Lem}\label{lem:tilde-conn}
		$\tilde{X}$ is connected.
	\end{Lem}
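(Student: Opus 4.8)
The plan is to show that the image of $\{e^{\sharp}\} \times B$ under the quotient map $q : \tilde{\Gamma} \times B \to \tilde{X}$ meets the image of every other sheet $\{g\} \times B$, in a connected chain, so that $\tilde{X}$ is a union of overlapping connected pieces all touching a common one. Since $B$ is connected (hence path-connected in the relevant sense, but connectedness suffices), each $q(\{g\} \times B)$ is connected, and it is enough to exhibit, for every $g \in \tilde{\Gamma}$, a finite sequence $g = g_0, g_1, \ldots, g_k = e^{\sharp}$ in $\tilde{\Gamma}$ with $q(\{g_{j}\} \times B) \cap q(\{g_{j+1}\} \times B) \neq \emptyset$ for each $j$; then the union of these sets is connected and contains our point, and ranging over all $g$ covers $\tilde{X}$.

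First I would observe that $T^3$ generates $\tilde{\Gamma}$ by construction, so an arbitrary $g \in \tilde{\Gamma}$ can be written as $g = t_1^{\sharp} t_2^{\sharp} \cdots t_m^{\sharp}$ with each $t_i \in T^3$. This reduces the problem to the case $g = t^{\sharp}$ with $t \in T^3$ (and then one chains). Next I would reduce $t \in T^3$ to $t \in T$: writing $t = s_1 s_2 s_3$ with $s_i \in T$ and using condition~\ref{item:c1} together with the generating property, one connects $\{(s_1 s_2 s_3)^{\sharp}\} \times B$ to $\{(s_1 s_2)^{\sharp}\} \times B$ to $\{s_1^{\sharp}\} \times B$ to $\{e^{\sharp}\} \times B$, provided each single-generator step works; and a single-generator step $g \mapsto gt^{\sharp}$ with $t \in T$ works precisely because, by~\ref{item:c1}, $sB \cap B \neq \emptyset$ for each $s \in S$, hence (walking along a word for $t$ in $S$, using~\ref{item:c1} at each letter and the defining relation $t^{\sharp} = s_1^{\sharp}\cdots s_k^{\sharp}$ in $\tilde\Gamma$) one can realize $t^{\sharp}$ as a product of generators each of which, applied to $B$, keeps a nonempty overlap. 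Concretely: if $x \in B$ and $tx \in B$ then $(gt^{\sharp}, x) \sim (g, tx)$, so the point $q(g, tx) = q(gt^{\sharp}, x)$ lies in both $q(\{gt^{\sharp}\}\times B)$ and $q(\{g\}\times B)$; and condition~\ref{item:c1} guarantees for each $s \in S$ the existence of such an $x$, which one bootstraps along a word $t = s_1 \cdots s_k$ by applying it one generator at a time.

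I expect the main obstacle to be bookkeeping the single-generator step carefully: for $t = s_1 \cdots s_k \in T = S^M$ one needs a point $x \in B$ with $s_k x \in B$, $s_{k-1} s_k x \in B$, \ldots, $s_1 \cdots s_k x \in B$ simultaneously, which is not immediate from~\ref{item:c1} alone since that only gives pairwise overlaps $sB \cap B \neq \emptyset$. The honest route is to not insist on a single $x$: instead, chain through intermediate group elements $g,\ gs_k^{\sharp},\ gs_{k-1}^{\sharp}s_k^{\sharp}, \ldots$, connecting consecutive ones by~\ref{item:c1} (each consecutive pair differs by a single $s_i^{\sharp}$ on the appropriate side, and $s_i B \cap B \neq \emptyset$ supplies the overlap point), and then use the defining relations of $\tilde{\Gamma}$ to see that the endpoint of this chain is $gt^{\sharp}$. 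Thus connectedness of $\tilde{X}$ follows from~\ref{item:c1} and the fact that $T^3$ generates $\tilde{\Gamma}$; conditions~\ref{item:c2} and~\ref{item:c3} are not needed for this lemma. I would conclude by noting that each $q(\{g\}\times B)$ is connected (continuous image of the connected set $B$), the chains make $\tilde{X}$ a union of connected sets all linked to $q(\{e^{\sharp}\}\times B)$, and a union of connected sets with a common point is connected.
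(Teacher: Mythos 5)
Your proposal is correct and is essentially the paper's own argument: the paper likewise uses the defining relations (via $S^j\subset T^3$ for $j\le 3M$, so $(S^{j-1})^{\sharp}S^{\sharp}=(S^j)^{\sharp}$) to see that $S^{\sharp}$ generates $\tilde{\Gamma}$, and then chains overlapping connected copies of $B$ using \eqref{item:c1} and the connectedness of $B$, with \eqref{item:c2} and \eqref{item:c3} indeed playing no role. One bookkeeping slip you should fix: the chain must append generators on the right, $g,\ gs_1^{\sharp},\ gs_1^{\sharp}s_2^{\sharp},\ \ldots,\ gt^{\sharp}$ (using that each partial product $s_1\cdots s_j$ lies in $T^3$ so the defining relations identify the endpoint with $gt^{\sharp}$), because the gluing identifies $q(\{h\}\times B)$ with $q(\{hs^{\sharp}\}\times B)$ only when the two elements differ by right multiplication by some element of $T^{\sharp}$; in your chain $g,\ gs_k^{\sharp},\ gs_{k-1}^{\sharp}s_k^{\sharp},\ldots$ consecutive elements differ by a conjugate of $s_{j}^{\sharp}$, which need not be of the form $u^{\sharp}$ with $u\in T$, so those consecutive pieces need not overlap.
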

	\begin{proof}
		Since $S^j \subset T^3$ for all $j \in \{ 1, \ldots , 3M \}$, we have $ (S^{j-1})^{\sharp} S^{\sharp} = (S^j )^{\sharp} $ for such $j$'s. Using this, one gets $(S^{\sharp} )^{3M} = (T^3)^{\sharp}$, meaning  $S^{\sharp}$ generates $(T^3)^{\sharp} $, and consequently $\tilde{\Gamma}$. The result then follows from \eqref{item:c1} and the fact that $B$ is connected.
	\end{proof}

	Fix $p \in X$ and define 
	\[  \Sigma :  = \{ g \in \tilde{\Gamma} \, \vert \,  p \in \Phi (g)  B   \} . \]
	Let $K \subset \Sigma $ be a maximal subset containing the identity and such that 
	\begin{center}
		if $a,b \in K$ are distinct, then $b \notin aT^{\sharp}$.   
	\end{center}

	\begin{Lem}\label{lem:k-separated}
		If $a ,b \in K$ are such that $a (T^3)^{\sharp} \cap  b (T^3)^{\sharp} \neq \emptyset$, then $a = b$. 
	\end{Lem}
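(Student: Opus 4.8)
The plan is to argue by contradiction: assuming $a \neq b$, I will show $a \in bT^{\sharp}$, which violates the property defining $K$. This is legitimate because the relation $b \in aT^{\sharp}$ is symmetric in $a,b$: indeed $T$ is symmetric, since $e \in S = S^{-1}$ forces $T = S^{M} = T^{-1}$, and the defining relation $t^{\sharp}(t^{-1})^{\sharp} = e_{\tilde{\Gamma}}$ in $\tilde{\Gamma}$ gives $(t^{\sharp})^{-1} = (t^{-1})^{\sharp}$; hence $a \in bT^{\sharp}$ is equivalent to $b \in aT^{\sharp}$, which for distinct elements of $K$ is forbidden.

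First I would unpack the hypothesis. From $a(T^3)^{\sharp} \cap b(T^3)^{\sharp} \neq \emptyset$, pick $t, t' \in T^3$ with $at^{\sharp} = b(t')^{\sharp}$ in $\tilde{\Gamma}$. Applying the morphism $\Phi$ and using $\Phi(u^{\sharp}) = u$ yields $\Phi(a)\,t = \Phi(b)\,t'$ in $\Gamma$, so $g := \Phi(b)^{-1}\Phi(a) = t'\,t^{-1}$ lies in $T^{6}$.

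The single geometric step is an application of condition \ref{item:c2}. Since $a, b \in K \subset \Sigma$ we have $\Phi(a)^{-1}p \in B$ and $\Phi(b)^{-1}p \in B$, and since $g\bigl(\Phi(a)^{-1}p\bigr) = \Phi(b)^{-1}p$, the point $\Phi(b)^{-1}p$ lies in $gB \cap B$; thus $gB \cap B \neq \emptyset$ with $g \in T^{6}$, and condition \ref{item:c2} forces $g \in T$.

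It remains to transport this back to $\tilde{\Gamma}$. We now have $t'\,t^{-1} = g \in T \subset T^{3}$ with $t', t^{-1} \in T^{3}$, so $t'$, $t^{-1}$, $g$ all lie in $T^{3}$ and satisfy $t' \cdot t^{-1} = g$ in $\Gamma$; the defining relations of $\tilde{\Gamma}$ then give $(t')^{\sharp}(t^{-1})^{\sharp} = g^{\sharp}$, i.e.\ $(t')^{\sharp}(t^{\sharp})^{-1} = g^{\sharp}$, and hence $a = b(t')^{\sharp}(t^{\sharp})^{-1} = bg^{\sharp} \in bT^{\sharp}$, the contradiction sought. The one point requiring care is the junction of the last two steps: one must know that $t'\,t^{-1}$ returns to $T$, not merely to $T^{6}$, before the defining relations of $\tilde{\Gamma}$ become applicable, and condition \ref{item:c2} is precisely what delivers this; everything else is bookkeeping in $\Gamma$ and $\tilde{\Gamma}$.
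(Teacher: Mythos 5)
Your argument is correct and is essentially the paper's own proof: from $at^{\sharp}=b(t')^{\sharp}$ you extract the element $t't^{-1}\in T^{6}$, use that $\Phi(a)^{-1}p,\Phi(b)^{-1}p\in B$ together with condition \ref{item:c2} to push it into $T$, and then use the defining relations of $\tilde{\Gamma}$ to conclude $a\in bT^{\sharp}$ (the paper writes the symmetric conclusion $b\in aT^{\sharp}$), contradicting the choice of $K$. The only difference is cosmetic bookkeeping about which of $a,b$ ends up multiplied by $T^{\sharp}$, which your symmetry remark correctly handles.
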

	\begin{proof}
		Assume there are $t_1 , t_2 \in T^3$ with $at_1 ^{\sharp} = bt_2 ^{\sharp}$. Then $\Phi (b)^{-1} p \in B$ and
		\[    t_1 t_2 ^{-1} \Phi (b)^{-1}p = \Phi (a)^{-1} p  \in B  ,    \]
		so from \eqref{item:c2} we deduce $t_1 t_2^{-1} \in T$.  This implies $(t_1 t_2 ^{-1})^{\sharp} = t_1 ^{\sharp}  (t_2^{-1})^{\sharp}$, and 
		\[ b = a t_1^{\sharp} (t_2 ^{-1})^{\sharp} = a (t_1 t_2 ^{-1})^{\sharp} \in  aT ^{\sharp} . \]
		Consequently, $a= b$. 
	\end{proof}

	\begin{Lem}\label{lem:k-big}
		For each $g \in \Sigma$, there is a unique $a \in K$ such that $g \in a T^{\sharp}  $. 
	\end{Lem}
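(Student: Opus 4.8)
The plan is to prove existence and uniqueness separately: uniqueness will be an immediate consequence of Lemma~\ref{lem:k-separated}, and existence will follow from the maximality built into the definition of $K$.

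For uniqueness, suppose $g \in aT^{\sharp} \cap bT^{\sharp}$ with $a,b \in K$. Since $T = S^M$ and $S$ is a symmetric generating set, $S$ contains $e$, so $S \subset S^2 \subset \cdots \subset S^M = T$; in particular $T$ is symmetric and $T \subset T^3$. Hence $aT^{\sharp} \cap bT^{\sharp} \subset a(T^3)^{\sharp} \cap b(T^3)^{\sharp}$, which is therefore nonempty, and Lemma~\ref{lem:k-separated} forces $a = b$. Before turning to existence I would record two elementary facts about the embedding $t \mapsto t^{\sharp}$: since $e\cdot e = e$ in $\Gamma$ and $e \in T^3$, the defining relations of $\tilde{\Gamma}$ give $e^{\sharp}e^{\sharp} = e^{\sharp}$, so $e^{\sharp}$ is the identity of $\tilde{\Gamma}$; and for $t \in T$, from $t t^{-1} = e$ with $t, t^{-1}, e \in T^3$ we get $t^{\sharp}(t^{-1})^{\sharp} = e^{\sharp} = e$, i.e.\ $(t^{-1})^{\sharp} = (t^{\sharp})^{-1}$. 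In particular $g \in gT^{\sharp}$ for every $g \in \tilde{\Gamma}$.

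For existence, fix $g \in \Sigma$. If $g \in K$, then $a = g$ works by the last observation. If $g \notin K$, then by maximality of $K$ the set $K \cup \{g\}$ fails the separation property, and since that property already holds within $K$, the failure must involve $g$: there is $a \in K$, distinct from $g$, with $g \in aT^{\sharp}$ or $a \in gT^{\sharp}$. In the first case we are done. In the second, write $a = gt^{\sharp}$ with $t \in T$ and multiply by $(t^{\sharp})^{-1} = (t^{-1})^{\sharp}$ to get $g = a(t^{-1})^{\sharp} \in aT^{\sharp}$, as desired. Combining this with the uniqueness already established completes the proof.

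The main obstacle is essentially bookkeeping rather than a genuine difficulty: one must be careful that the symbolic identities used inside $\tilde{\Gamma}$ — that $e^{\sharp}$ is trivial and that $\sharp$ respects inverses of elements of $T$ — are legitimate, but these follow directly from the relations defining $\tilde{\Gamma}$, and the rest is a routine use of the maximality of $K$ together with Lemma~\ref{lem:k-separated}.
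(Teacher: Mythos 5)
Your proof is correct and takes essentially the same approach as the paper, which just states that existence follows from maximality of $K$ and uniqueness from Lemma~\ref{lem:k-separated}. You have simply spelled out the bookkeeping (that $T$ is symmetric, $T\subset T^3$, $e^\sharp$ is the identity, and $(t^{-1})^\sharp=(t^\sharp)^{-1}$ for $t\in T$) that the paper leaves implicit.
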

	
	\begin{proof}
		Existence follows from the maximality of  $K$, and uniqueness from Lemma \ref{lem:k-separated}. 
	\end{proof}
	
	Fix $g_0 \in K$, set  $U : = \Phi (g_0) B$, and define
	\[    \Sigma ' : = \{ g \in \tilde{ \Gamma } \vert \Phi (g) B \cap U \neq \emptyset \}  .    \]

	\begin{Lem}\label{lem:k-big-big}
		For each $g \in \Sigma ' $, there is a unique $a \in K$ such that $g \in a (T^2)^{\sharp}  $. 
	\end{Lem}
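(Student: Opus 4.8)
The plan is to reduce Lemma~\ref{lem:k-big-big} to Lemma~\ref{lem:k-big} by \emph{rounding} $g$ to an element of $\Sigma$. Concretely, I would first show that there is some $t \in T$ with $gt^{\sharp} \in \Sigma$; applying Lemma~\ref{lem:k-big} to $gt^{\sharp}$ then yields $a \in K$ with $gt^{\sharp} \in aT^{\sharp}$, from which $g \in a(T^2)^{\sharp}$ should follow after translating back through the defining relations of $\tilde{\Gamma}$. Uniqueness will be immediate from Lemma~\ref{lem:k-separated}.

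For the rounding step, set $q := \Phi(g_0)^{-1}p$, which lies in $B$ because $g_0 \in K \subset \Sigma$, and put $h := \Phi(g_0)^{-1}\Phi(g) = \Phi(g_0^{-1}g) \in \Gamma$. The hypothesis $g \in \Sigma'$ says $\Phi(g)B \cap \Phi(g_0)B \neq \emptyset$, and applying $\Phi(g_0)^{-1}$ turns this into $hB \cap B \neq \emptyset$, hence also $h^{-1}B \cap B \neq \emptyset$. Now I would invoke condition \eqref{item:c3} with $h^{-1}$ in place of $g$ to get $h^{-1}B \subset TB$; since $q \in B$, this produces $t \in T$ with $h^{-1}q \in tB$. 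Unwinding the definitions (one has $h^{-1}q = \Phi(g)^{-1}p$), this is exactly $p \in \Phi(g)\,t\,B = \Phi(gt^{\sharp})B$, i.e.\ $gt^{\sharp} \in \Sigma$ --- note $t \in T \subset T^3$, so $t^{\sharp}$ makes sense.

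With $gt^{\sharp} \in \Sigma$ in hand, Lemma~\ref{lem:k-big} gives a unique $a \in K$ and some $u \in T$ with $gt^{\sharp} = au^{\sharp}$. I would then recover $g$ by multiplying on the right by $(t^{\sharp})^{-1} = (t^{-1})^{\sharp}$ and merging: $g = au^{\sharp}(t^{-1})^{\sharp} = a(ut^{-1})^{\sharp}$, where the last equality uses the defining relation $u^{\sharp}(t^{-1})^{\sharp} = (ut^{-1})^{\sharp}$, valid because $u, t^{-1} \in T$ and $ut^{-1} \in T^2 \subset T^3$. Since $ut^{-1} \in T^2$, this gives $g \in a(T^2)^{\sharp}$, as desired. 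For uniqueness, if $a,b \in K$ both satisfy $g \in a(T^2)^{\sharp}$ and $g \in b(T^2)^{\sharp}$, then $a(T^3)^{\sharp} \cap b(T^3)^{\sharp} \neq \emptyset$ (as $T^2 \subset T^3$), so Lemma~\ref{lem:k-separated} forces $a=b$.

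The only point requiring care --- and the main, rather minor, obstacle --- is that $t \mapsto t^{\sharp}$ is not a homomorphism: it respects a product only when all three factors lie in $T^3$. So at each step where $\sharp$-symbols are split or combined (the identification $e^{\sharp} = e_{\tilde{\Gamma}}$, the identity $(t^{\sharp})^{-1} = (t^{-1})^{\sharp}$, and the merge $u^{\sharp}(t^{-1})^{\sharp} = (ut^{-1})^{\sharp}$) one must first check that the relevant products stay inside $T^3$, which they do here since we only ever multiply two elements of $T$. Apart from this bookkeeping, the geometric content is a single application of condition \eqref{item:c3}.
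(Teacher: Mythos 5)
Your proposal is correct and follows essentially the same route as the paper: rewrite $g\in\Sigma'$ as $\Phi(g^{-1}g_0)B\cap B\neq\emptyset$, apply condition \eqref{item:c3} to produce $t\in T$ with $gt^{\sharp}\in\Sigma$, then conclude via Lemma \ref{lem:k-big} for existence and Lemma \ref{lem:k-separated} for uniqueness. The paper's proof is just a terser version of yours, leaving the $\sharp$-bookkeeping (merging $u^{\sharp}(t^{-1})^{\sharp}=(ut^{-1})^{\sharp}$ inside $T^2\subset T^3$) implicit, which you spell out correctly.
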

	
	\begin{proof}
		By the definition of $U$, we have  $\Phi (g^{-1}g_0) B \cap B \neq \emptyset $, so by \eqref{item:c3} there are $t \in T$ and $x \in B$ with
		\[      t x = \Phi (g^{-1} g_0) \Phi (g_0 ^{-1})p = \Phi (g^{-1}) p .             \]
		This means $gt^{\sharp} \in \Sigma $, so existence follows from Lemma \ref{lem:k-big}. Uniqueness follows from Lemma \ref{lem:k-separated}.
	\end{proof}
	
	\begin{Lem}\label{lem:monodromy-i}
		\rm  For $(g,x ) \in \tilde{\Gamma } \times B$, its class in $\tilde{X}$ is mapped via $\Psi$ into $U$ if and only if 
		\[
		(g,x) \in  \bigcup_{a \in K } \left( \bigcup_{\,\,\,\,t \in T^2} \left( \{ a  t ^{\sharp} \} \times ( ( t^{-1}  \Phi (a)^{-1} U ) \cap B  )    \right)     \right)    .
		\]
		Consequently, the preimage of $U$ in $\tilde{X}$ is given by 
		\[   \Psi ^{-1}(U) = \bigcup_{a \in K } \left( \bigcup_{\,\,\,\,t \in T^2} \left( \{ a  t ^{\sharp} \} \times ( ( t^{-1}  \Phi (a)^{-1} U ) \cap B  )    \right)     \right)   / \sim   .   \]
	\end{Lem}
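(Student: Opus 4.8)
The plan is to reduce the statement to Lemma~\ref{lem:k-big-big} by unwinding the definitions. Since $\Psi$ descends to a well-defined map on $\tilde X$ with $\Psi(g,x)=\Phi(g)x$ (well-definedness being immediate from $\Phi$ being a group morphism), the class of a pair $(g,x)\in\tilde\Gamma\times B$ is mapped into $U=\Phi(g_0)B$ if and only if $\Phi(g)x\in U$. Hence it is enough to show that $\Phi(g)x\in U$ holds exactly for those $(g,x)$ lying in the set on the right-hand side of the asserted identity.

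For the forward implication, suppose $\Phi(g)x\in U$. Since $x\in B$ we have $\Phi(g)x\in\Phi(g)B$, so $\Phi(g)B\cap U\neq\emptyset$, i.e.\ $g\in\Sigma'$. Lemma~\ref{lem:k-big-big} then yields $a\in K$ and $t\in T^2$ with $g=at^\sharp$. As $T^2\subset T^3$ and $\Phi(t^\sharp)=t$, this gives $\Phi(g)=\Phi(a)\,t$, hence $\Phi(a)\,t\,x\in U$, equivalently $x\in t^{-1}\Phi(a)^{-1}U$; together with $x\in B$ this places $(g,x)=(at^\sharp,x)$ in the set indexed by this $a$ and $t$. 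Conversely, if $(g,x)$ lies in the right-hand side, then $g=at^\sharp$ for some $a\in K$, $t\in T^2$, with $x\in(t^{-1}\Phi(a)^{-1}U)\cap B$, and the same identity $\Phi(g)=\Phi(a)\,t$ gives $\Phi(g)x=\Phi(a)\,t\,x\in U$. This proves the equivalence.

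For the final assertion, let $q\colon\tilde\Gamma\times B\to\tilde X$ be the (surjective) quotient map. The equivalence just proved says precisely that $q^{-1}\bigl(\Psi^{-1}(U)\bigr)$ equals the displayed union of subsets of $\tilde\Gamma\times B$, so applying $q$ and using $q\bigl(q^{-1}(A)\bigr)=A$ for surjective $q$ yields the claimed description of $\Psi^{-1}(U)$. I do not expect any genuine obstacle here: the geometric content has already been spent in Lemma~\ref{lem:k-big-big} (and, through it, in hypothesis~\ref{item:c3}), and what remains is bookkeeping with the morphism $\Phi$ and the definition of $U$.
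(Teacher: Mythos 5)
Your proof is correct and follows essentially the same route as the paper: the converse inclusion by the direct computation $\Psi(at^\sharp,x)=\Phi(a)tx\in U$, and the forward direction by observing $g\in\Sigma'$ and invoking Lemma~\ref{lem:k-big-big} to write $g=at^\sharp$ with $a\in K$, $t\in T^2$. The only difference is that you spell out the well-definedness of $\Psi$ and the quotient-map bookkeeping for the ``consequently'' clause, which the paper leaves implicit.
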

	
	\begin{proof}
		For $a \in K$, $t \in T^2$, $x \in (t^{-1} \Phi (a)^{-1} U)\cap B $, we directly compute
		\[   \Psi (a t^{\sharp }, x) = \Phi (a) t x \in U .    \]
		On the other hand, if $(g,x) \in \tilde{\Gamma} \times B$ satisfies  $\Phi (g) x \in U$, by Lemma \ref{lem:k-big-big} there are $a \in K$ and $t \in T^2$ with $g = a t^{\sharp}$, so $x \in \Phi (g)^{-1}  U =  t ^{-1} \Phi (a)^{-1} U $.\end{proof}
	
	We say that a subset $A \subset \tilde{\Gamma } \times B$ is \textit{saturated} if it is a union of equivalence classes of the relation $\sim$. 
	\begin{Lem}\label{lem:monodromy-ii}
		\rm For $a \in K$, the sets 
		\[   W_a : =   \bigcup_{\,\,\,\,t \in T^2} \left( \{ a  t ^{\sharp} \} \times ( ( t^{-1}  \Phi (a)^{-1} U ) \cap B  )    \right)     \subset \tilde{\Gamma } \times B    \]
		are open, disjoint, and saturated.
	\end{Lem}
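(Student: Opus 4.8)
The plan is to establish the three claimed properties of $W_a$ separately, using the preceding lemmas as the main tools. First I would prove \emph{openness}: for fixed $a\in K$ and $t\in T^2$, the set $(t^{-1}\Phi(a)^{-1}U)\cap B$ is the intersection of two open subsets of $X$ — the open set $U=\Phi(g_0)B$ pushed around by an isometry, and the open set $B$ — hence open in $B$; therefore each slice $\{at^\sharp\}\times\big((t^{-1}\Phi(a)^{-1}U)\cap B\big)$ is open in $\tilde\Gamma\times B$ (recall $\tilde\Gamma$ carries the discrete topology), and $W_a$, being a finite union of such slices, is open.

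Next I would prove \emph{disjointness} of the $W_a$ for distinct $a\in K$. Suppose $(g,x)\in W_a\cap W_b$. Then $g=at_1^\sharp=bt_2^\sharp$ for some $t_1,t_2\in T^2$, so $a(T^2)^\sharp\cap b(T^2)^\sharp\neq\emptyset$; since $T^2\subset T^3$ this gives $a(T^3)^\sharp\cap b(T^3)^\sharp\neq\emptyset$, and Lemma \ref{lem:k-separated} forces $a=b$. (Equivalently, one may invoke the uniqueness clause of Lemma \ref{lem:k-big-big} directly.)

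Finally, the \emph{saturation} property is the point that requires the most care, and I expect it to be the main obstacle. I must show that if $(g,x)\in W_a$ and $(g,x)\sim(g',x')$, then $(g',x')\in W_a$. By Lemma \ref{lem:relation-is-good}, it suffices to treat a single elementary move: $g'=gt^\sharp$ and $x=tx'$ for some $t\in T$ with $x,x'\in B$. Write $g=a t_0^\sharp$ with $t_0\in T^2$ and $x\in(t_0^{-1}\Phi(a)^{-1}U)\cap B$. Then $\Psi(g',x')=\Phi(g')x'=\Phi(g)tx'=\Phi(g)x\in U$, so by Lemma \ref{lem:monodromy-i} (applied to the preimage of $U$) the class of $(g',x')$ lies in $\Psi^{-1}(U)$; and by the uniqueness in Lemma \ref{lem:k-big-big}, since $g'=gt^\sharp=a t_0^\sharp t^\sharp$ and $a$ is the unique element of $K$ with $g\in a(T^2)^\sharp$, the same $a$ is the unique element of $K$ with $g'\in a(T^3)^\sharp$; combined with $\Phi(g')x'\in U$ and the argument of Lemma \ref{lem:monodromy-i}, one refines this to $g'\in a(T^2)^\sharp$, which exhibits $(g',x')\in W_a$. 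The delicate bookkeeping is to confirm that the representative $t\in T$ appearing in the move can be absorbed so that the new exponent stays inside $T^2$ rather than merely $T^3$; this is exactly where condition \eqref{item:c2} (no collapse of $T^6$-translates) is used, via Lemma \ref{lem:k-big-big}. Chaining the elementary moves then gives saturation of $W_a$ in general.
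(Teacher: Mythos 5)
Your proposal is correct and follows essentially the same route as the paper: openness is immediate from the topology of $\tilde\Gamma\times B$, disjointness from Lemma \ref{lem:k-separated}, and saturation by computing $\Psi(g',x')\in U$, invoking Lemma \ref{lem:monodromy-i} to place $(g',x')$ in some $W_b$, and then forcing $b=a$ via Lemma \ref{lem:k-separated} after observing $g'\in a(T^3)^\sharp$. Two small expository remarks: the uniqueness you want for the $(T^3)^\sharp$-cosets is exactly Lemma \ref{lem:k-separated}, not ``the uniqueness in Lemma \ref{lem:k-big-big}''; and no ``chaining of elementary moves'' is needed at the end, since Lemma \ref{lem:relation-is-good} already shows the full equivalence relation $\sim$ is realized by a single move $t\in T$.
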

	\begin{proof}
		The fact that they are open is straightforward, since $(t^{-1} \Phi (a)^{-1}U) \cap B$ is open in $B$ for each $t \in T^2$. The fact that they are disjoint follows from Lemma \ref{lem:k-separated}.  To see that they are saturated, assume an element $(g, x) \in \tilde{\Gamma} \times B$ is equivalent to an element $(at_1^{\sharp}, y) \in W_a$.  By Lemma \ref{lem:monodromy-i} we have 
		\[ \Psi (g,x) = \Psi (at_1^{\sharp}, y) \in U.\] 
		Again by Lemma \ref{lem:monodromy-i}, this implies $(g,x) \in W_b$ for some $b \in K$ and $g = b t_2^{\sharp}$ for some $t_2 \in T^2$.        Since $(g,x) \sim (at_1^{\sharp}, y)  $,  there is $s \in T$ with 
		$at_1^{\sharp} s^{\sharp} = g =  b t_2 ^{\sharp} $. From Lemma \ref{lem:k-separated} we conclude that $a = b$. This shows $(g,x)\in W_a$.
	\end{proof}
	
	\begin{Lem}\label{lem:monodromy-iii}
		\rm For each $a \in K$, the image of $W_a$ in $\tilde{X}$ is sent  homeomorphically via $\Psi$ onto $U$.
	\end{Lem}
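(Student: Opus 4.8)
The plan is to show that $\Psi$ maps the image $\bar W_a \subset \tilde X$ of $W_a$ (under the quotient projection $\pi : \tilde\Gamma \times B \to \tilde X$) bijectively and openly onto $U$; continuity of $\Psi|_{\bar W_a}$ is automatic from $\Psi(g,x) = \Phi(g)x$, so it remains to check surjectivity, injectivity, and openness. For \emph{surjectivity}, given $u \in U = \Phi(g_0) B$, I would produce $t \in T$ with $t^{-1}\Phi(a)^{-1} u \in B$, since then $(a t^{\sharp}, t^{-1}\Phi(a)^{-1}u)$ lies in $W_a$ (using $T \subseteq T^2$, valid as $e \in T$) and $\Psi$ sends its class to $u$. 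Writing $h := \Phi(a^{-1} g_0) \in \Gamma$, the points $\Phi(a)^{-1}p$ and $\Phi(g_0)^{-1}p$ lie in $B$ because $a, g_0 \in \Sigma$, and $\Phi(a)^{-1}p = h\,(\Phi(g_0)^{-1}p)$, so $\Phi(a)^{-1}p \in hB \cap B$. Condition \ref{item:c3} then gives $hB \subseteq TB$; since $\Phi(a)^{-1}u \in hB$, some $t \in T$ has $\Phi(a)^{-1}u \in tB$, which is exactly what is needed.

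For \emph{injectivity}, suppose two points of $W_a$, necessarily of the form $(a t_1^{\sharp}, x_1)$ and $(a t_2^{\sharp}, x_2)$ with $t_1, t_2 \in T^2$ and $x_1, x_2 \in B$, have equal $\Psi$-image. Then $\Phi(a) t_1 x_1 = \Phi(a) t_2 x_2$, hence $x_1 = (t_1^{-1} t_2) x_2$; since $T$ is symmetric, $t_1^{-1} t_2 \in T^4 \subseteq T^6$, and as $x_1, x_2 \in B$ condition \ref{item:c2} forces $t_1^{-1} t_2 \in T$. The defining relations of $\tilde\Gamma$ (with the triple $t_1, \, t_1^{-1}t_2, \, t_2 \in T^3$) give $t_1^{\sharp} (t_1^{-1}t_2)^{\sharp} = t_2^{\sharp}$ in $\tilde\Gamma$, and then the generating relation of $\sim$, applied with $g = a t_1^{\sharp}$ and $t = t_1^{-1}t_2$ (for which $x_2, t x_2 = x_1 \in B$), yields $(a t_2^{\sharp}, x_2) \sim (a t_1^{\sharp}, x_1)$. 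So the two classes in $\bar W_a$ coincide, proving injectivity of $\Psi|_{\bar W_a}$ (representatives in $W_a$ exist since $\bar W_a = \pi(W_a)$, and $W_a$ is saturated by Lemma \ref{lem:monodromy-ii}).

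For \emph{openness}, set $q := \Psi \circ \pi|_{W_a} : W_a \to U$ and write $W_a$ as the union of the open sets $\{a t^{\sharp}\} \times V_t$, $t \in T^2$, with $V_t := (t^{-1}\Phi(a)^{-1}U) \cap B$ open in $X$; on each such piece $q$ acts by $x \mapsto \Phi(a) t x$, an isometry of $X$, which carries $V_t$ onto the open subset $\Phi(a) t V_t \subseteq U$. Hence $q$ is open, and since $\pi|_{W_a}$ is a continuous surjection onto $\bar W_a$ one has $\Psi|_{\bar W_a}(O) = q(\pi|_{W_a}^{-1}(O))$ for every open $O \subseteq \bar W_a$, so $\Psi|_{\bar W_a}$ is open; a continuous open bijection is a homeomorphism. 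The only delicate point is the bookkeeping with powers of $T$ — making sure the elements to which \ref{item:c2} and \ref{item:c3} are applied genuinely land in $T^6$ and in the relevant overlap hypotheses — but no real obstacle is anticipated, as Lemmas \ref{lem:monodromy-i} and \ref{lem:monodromy-ii} have already isolated the combinatorial content.
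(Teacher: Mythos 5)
Your proof is correct and follows essentially the same route as the paper: surjectivity via condition \ref{item:c3} applied to $\Phi(a^{-1}g_0)$ (using $a,g_0\in\Sigma$), injectivity via condition \ref{item:c2} together with the defining relations of $\tilde\Gamma$ and the generating relation of $\sim$, and openness by observing that on each slice $\{at^{\sharp}\}\times V_t$ the map acts as the isometry $x\mapsto\Phi(a)tx$. Your bookkeeping with powers of $T$ (e.g.\ $t_1^{-1}t_2\in T^4\subset T^6$, $T\subset T^2$ since $e\in S$) checks out, so no gap remains.
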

	
	\begin{proof}
		To check surjectivity, pick $x \in U$. Since $\Phi (a^{-1} g_0 ) B \cap B \neq \emptyset$, by \eqref{item:c3} there are $t \in T$ and $y \in B$ with 
		\[     ty = \Phi (a^{-1}g_0) \Phi (g_0 )^{-1}x  = \Phi (a )^{-1}  x     ,         \]
		so $\Psi (at^{\sharp}, y) = x$.     To check injectivity, assume 
		\[   \Psi ( a t_1^{\sharp} , x_1 ) = \Psi ( at_2^{\sharp}, x_2)            \]
		for some $t_1, t_2 \in T^2$, $x_1 \in ( t^{-1}_1 \Phi (a)^{-1}  U ) \cap B $, $x_2 \in ( t_2^{-1}\Phi (a)^{-1} U   )\cap B$. Then $t_1 x_1 = t_2 x_2$, and 
		\[  t_2 ^{-1} t_1 x_1 = x_2 .    \]
		From \eqref{item:c2}, we deduce $t_2^{-1}t_1 \in T$, so
		\[   ( a t_2 ^{\sharp} , x_2) \sim (a t_2^{\sharp} (t_2 ^{-1} t_1 )^{\sharp} , t_1 ^{-1} t_2 x_2) = ( a t_1 ^{\sharp} , x_1).                                \]
		To check that $\Psi \vert_{W_a}$ is open, take $\mathcal{O} \subset W_a$ open and saturated containing the class of 
		\[( a t ^{\sharp} ,  x) \in  \{ a t ^{\sharp}\} \times ( (  t^{-1} \Phi (a) ^{-1} U  )  \cap  B)   . \]
		Then $\Psi $ sends $ ( \{ at^{\sharp}\} \times  B    ) \cap \mathcal{O} $ to an open neighborhood of $\Phi( a)t(x)$. Since $( a t ^{\sharp} ,  x) $ was arbitrary, $\Psi \vert _{W_a} $ is open.
	\end{proof}

	\begin{proof}[Proof of Theorem \ref{thm:monodromy}:] 
		By Lemma  \ref{lem:tilde-conn}, $\tilde{X}$ is connected.  By Lemmas \ref{lem:monodromy-i}, \ref{lem:monodromy-ii}, and \ref{lem:monodromy-iii}, $U$ is an evenly covered neighborhood. Since $p$ was arbitrary, $\Psi $ is a covering map. The number of sheets is precisely $\vert K \vert $, so $\Psi$ is a homeomorphism if and only if $K$ consists of a single element, which happens if and only if \eqref{eq:psi-homeo} holds. 
	\end{proof}

	\section{Proof of the diameter theorems}\label{sec:proof}

    In this section, we prove Theorems   \ref{thm:main-technical} and \ref{thm:separated-subgroup} by following the outline presented in Section \ref{sec:outline}. Before we proceed to the proof, we present an example showing that Theorem \ref{thm:main-technical} fails if one removes the simple-connectedness hypothesis.

    \begin{Exa}\label{ex:cylinder}
        Let $X_i : = \mathbb{S}^1_{i} \times \mathbb{R}$, where $\mathbb{S}^1_i $ represents the circle of radius $i$, and let $g_i \in \iso (X_i)$ be defined as 
        \[      g_i  (x,t)  : =   (  e^{2 \pi \sqrt{-1} / i } \cdot x , t + 1/ i  )  .   \]
        Set $G_i : = \langle g_i \rangle $ and $ \Gamma _i : = \langle g_i ^{i} \rangle $. Then for any choice of basepoints $p_i \in X_i$, the tuples $(X_i, p_i, G_i, \Gamma _i )$ satisfy the hypotheses of Theorem \ref{thm:main-technical} with the exeption of $X_i$ being simply-connected, and 
        \[          \diam (X_i / \Gamma _i ) \to \infty .                   \]
    \end{Exa}

	\begin{proof}[Proof of Theorem \ref{thm:main-technical}] Assume by contradiction that $\diam (X_i / \Gamma _ i ) \to \infty$. By  Theorem \ref{thm:fukaya-yamaguchi},  after taking a subsequence we can assume the groups $\Gamma_i$ and $G_i$ converge to groups $\Gamma$ and $G$, respectively. Moreover, by \eqref{eq:phi-no-escape} we can assume for each $g \in \mathbb{Z}^n$ the sequence $\varphi_i (g) $ converges to an isometry $\varphi (g) \in \Gamma $, giving us an embedding  $\varphi : \mathbb{Z}^n \to \Gamma$. Denote $\Gamma _ 0 : = \varphi (\mathbb{Z}^n) \leq \Gamma $. 

		 By Theorem \ref{thm:fukaya}, the sequence  $ ( X_i/\Gamma_i , [p_i ]) $ converges in the pointed Gromov--Hausdorff sense to $ ( X/\Gamma , [p] )$,  so $X/ \Gamma$ is not compact. On the other hand, again by Theorem \ref{thm:fukaya}, the sequence $X_i/G_i$ converges in the Gromov--Hausdorff sense to $X/G$, so $\diam (X/G) \leq D$ and $X/G$ is compact. From the equality 
		\[  ( X/ \Gamma ) / (G / \Gamma) = X / G  ,     \]
		we deduce that $G/ \Gamma$ is not compact, and consequently $G/ \Gamma_0$ is not compact either.

		By Lemma \ref{lem:lattice} there is a discrete co-compact subgroup $\overline{H} \leq G / \Gamma _0 $ equipped with an isomorphism $\overline{\psi} : \mathbb{Z}^m \to \overline{H}$ for some $m\geq 1$. Let $H  : = \overline{H} \Gamma_0 \leq G$ be the preimage of $\overline{H}$ in $G$. We can then lift $\overline{\psi} $ to an embedding $\psi : \mathbb{Z}^m \to H$.  Define morphisms $\psi _i : \mathbb{Z}^m \to G_i $ as follows:  for each $j \in \{ 1, \ldots , m \}$, let $\psi _i (e_j)  \in G_i $ be a sequence that converges to $\psi (e_j)$ as $i \to \infty$.  
		
		By putting together $\varphi _i , \varphi $ with $\psi_i, \psi$, we get for $N : = n + m$ morphisms 
		\[
		\Phi_i  :  \mathbb{Z}^N \to G_i, \hspace{2cm}    \Phi  :  \mathbb{Z}^N \to G,
		\]
		with
		\[
		\Phi_i (g,h) : = \varphi_i(g) \psi _i (h) , \hspace{2cm}    \Phi (g,h)  : = \varphi (g) \psi (h) ,
		\]
		for all $g \in \mathbb{Z}^n$, $h \in \mathbb{Z}^m$. By the construction of $\varphi$ and $\psi_i$, we have
		\begin{equation}\label{eq:phis-converge}
			\Phi_i (g) \to \Phi (g)  \text{  for each }g \in \mathbb{Z}^N.    
		\end{equation}
		Notice that while these two maps look very similar,  $\Phi$ is a co-compact embedding with image $H$, but $\Phi_i $ cannot be injective since $N > n$. Denote $H_i : = \Phi _i (\mathbb{Z}^N) \leq G_i$.

		Pick $  r  >  \diam (X/ H)  $ with $d (g p ,p ) \neq 2 r $ for all $g \in H  $, and  define
		\begin{gather*}
			B   =  B_{r} (p ) \subset X , \hspace{2cm}      B_ i   : = B_{r} (p_i) \subset X_i,        \\
			S  : = \{ g \in \mathbb{Z} ^N \, \vert \,  ( \Phi (g) B ) \cap B \neq \emptyset \} .
		\end{gather*}
		By Lemma \ref{lem:generators}, $S$ is a finite generating set of $\mathbb{Z}^N$, so   by Theorem \ref{thm:defining}, there is  $M \in \mathbb{N}$ such that $T : = S^M$ satisfies 
		\[    \{  g \in \mathbb{Z}^N \vert d (\Phi (g) p , p ) \leq 4r \}  \subset T        \]
		and $T^3$ is a defining set of $\mathbb{Z}^N$. 
		\begin{Lem}\label{lem:items} 
			For $i$ large enough, the following holds:
			\begin{enumerate}[label= \textbf{\roman*}]
				\item For all $s \in S$ we have $\Phi_i(s)B_i \cap B_i \neq \emptyset$.\label{item:a1}
				\item If $t \in T^6$ satisfies $\Phi_i (t)B _i \cap B_i  \neq \emptyset$, then $t \in T$. \label{item:a2}
				\item If $h \in H_i $ satisfies $ h B_i \cap B_i \neq \emptyset$, then $  h B_i \subset \Phi _i(T) B_i $. \label{item:a3}
				\item $\Phi_i \vert _{T^6} : T^6 \to H_i$ is injective. \label{item:a5}
			\end{enumerate}
		\end{Lem}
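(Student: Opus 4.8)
The plan is to handle \ref{item:a1}, \ref{item:a2}, and \ref{item:a5} by reducing them to displacement inequalities together with \eqref{eq:phis-converge} over finite subsets of $\mathbb Z^N$, and to treat \ref{item:a3} by a separate covering argument. The basic observation is that for an isometry $h$ of a proper geodesic space one has $hB_\rho(q)\cap B_\rho(q)=B_\rho(hq)\cap B_\rho(q)\ne\emptyset$ if and only if $d(hq,q)<2\rho$: one direction is the triangle inequality and the other follows from the midpoint of a geodesic from $q$ to $hq$. Hence $S=\{g\in\mathbb Z^N\,:\,d(\Phi(g)p,p)<2r\}$, and each condition of the form ``$\Phi_i(g)B_i\cap B_i\ne\emptyset$'' is equivalent to ``$d(\Phi_i(g)p_i,p_i)<2r$''. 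Since \eqref{eq:phis-converge} gives $d(\Phi_i(g)p_i,p_i)\to d(\Phi(g)p,p)$ for each fixed $g$, and \ref{item:a1}, \ref{item:a2}, \ref{item:a5} only involve $g$ in $S$, in $T^6$, or of the form $\Phi(t)\Phi(t')^{-1}$ with $t,t'\in T^6$ --- all finite --- these follow by choosing $i$ large uniformly over the relevant finite set. Indeed: for \ref{item:a1}, every $s\in S$ satisfies $d(\Phi(s)p,p)<2r$, hence $d(\Phi_i(s)p_i,p_i)<2r$ for $i$ large; for \ref{item:a2}, the hypothesis $d(\Phi_i(t)p_i,p_i)<2r$ forces $d(\Phi(t)p,p)<3r$, in particular $\le 4r$, so $t$ lies in $\{g\,:\,d(\Phi(g)p,p)\le 4r\}\subset T$; for \ref{item:a5}, injectivity of $\Phi$ gives $\Phi(t)\Phi(t')^{-1}\ne e$ and hence $d(\Phi(t)\Phi(t')^{-1}p,p)>0$ for distinct $t,t'\in T^6$, whence $\Phi_i(t)\ne\Phi_i(t')$ for $i$ large.

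The real content is \ref{item:a3}, where $h$ runs over the infinite group $H_i$ and we have no control on its individual elements --- recall $\Phi_i$ is very far from injective --- so the ``finite set plus convergence'' scheme does not apply directly. Instead I would cover a fixed metric ball of $X_i$ by the finitely many sets $\Phi_i(t)B_i$, $t\in T$. Put $D_0:=\diam(X/H)<r$ and fix radii $\rho\in(D_0,r)$ and $R\in(3r,\,4r-D_0]$; the latter interval is nonempty precisely because $D_0<r$. Co-compactness of the $H$-action on $X$ gives, for each $y\in\overline{B_R(p)}$, some $g'\in\mathbb Z^N$ with $d(y,\Phi(g')p)\le D_0$, and then $d(\Phi(g')p,p)\le D_0+R\le 4r$, so $g'\in T$; therefore $\overline{B_R(p)}\subset\bigcup_{t\in T}B_\rho(\Phi(t)p)$. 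This is a cover of a compact set by open balls of strictly smaller radius, so it is stable under the Gromov--Hausdorff perturbation $X_i\to X$ together with the convergences $\Phi_i(t)p_i\to\Phi(t)p$ over the finite set $T$: a routine contradiction-and-compactness argument (pass to a subsequential limit $y_i\to y\in\overline{B_R(p)}$ and use $d(y_i,\Phi_i(t)p_i)\to d(y,\Phi(t)p)$) shows that for $i$ large $B_R(p_i)\subset\bigcup_{t\in T}B_r(\Phi_i(t)p_i)=\Phi_i(T)B_i$. Finally, if $h\in H_i$ satisfies $hB_i\cap B_i\ne\emptyset$, then $d(hp_i,p_i)<2r$ by the opening observation, so $hB_i=B_r(hp_i)\subset B_{3r}(p_i)\subset B_R(p_i)\subset\Phi_i(T)B_i$, which is \ref{item:a3}. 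Taking $i$ large enough to accommodate all four items at once completes the proof.

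The main obstacle is \ref{item:a3}: the naive argument fails because the relevant set of group elements is infinite and its finite truncations have no uniform bound on their size, so one must transfer a \emph{robust} (strictly-smaller-radius) covering of a large ball rather than information about individual isometries --- and this is exactly the place where the slack $r>\diam(X/H)$ is spent. Beyond that, I anticipate only mild bookkeeping of the radii $\rho<r<R\le 4r-D_0$ and care in stating the Gromov--Hausdorff transfer of the covering.
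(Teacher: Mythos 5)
Your proof is correct and follows essentially the same route as the paper's: items \ref{item:a1}, \ref{item:a2}, \ref{item:a5} reduce to displacement estimates at the base point over the finite sets $S$, $T^6$, $T^{12}$ together with the convergence \eqref{eq:phis-converge}, and item \ref{item:a3} spends the slack $r > \diam (X/H)$ together with a compactness argument. Packaging \ref{item:a3} as a transferable cover $\overline{B_R(p)} \subset \bigcup_{t \in T} B_\rho(\Phi(t)p)$ with $\rho < r$, rather than running the contradiction directly on a bad sequence $x_i \in h_iB_i \setminus \Phi_i(T)B_i$ as the paper does, is a presentational difference only; the estimates are the same. One step in \ref{item:a5} deserves to be spelled out: you assert that $\Phi(t)\Phi(t')^{-1} \neq e$ ``hence'' $d(\Phi(t)\Phi(t')^{-1}p,p) > 0$, which tacitly assumes the $H$-action is free at $p$. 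This is true, but requires the observation that $H \cong \mathbb{Z}^N$ is discrete and torsion-free, so its isotropy at $p$ is a discrete torsion-free subgroup of the compact stabilizer of $p$ in $\iso(X)$, hence trivial. The paper avoids this by using \eqref{eq:phis-converge} at the level of isometries rather than orbit points: if $\Phi_i(t) = e$ for infinitely many $i$, the equivariant Gromov--Hausdorff limit forces $\Phi(t) = e$ directly, contradicting injectivity of $\Phi$.
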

		\begin{proof}
			If \eqref{item:a1} fails, after passing to a subsequence, there would be $s \in S$ with $\Phi_i (s) B_i \cap B_i  = \emptyset $ for all $i$. Pick $x \in \Phi (s) B \cap B$, and $x_i \in X_i$ converging to $x$. Then 
			\[    d( x_i , p_i  ) \to d (x,p)  , \hspace{2cm} d(x_i, \Phi _i (s) p_i) \to d(x , \Phi (s) p ),                                \]
			so 
			\[           x_i  \in \Phi _i (s) B_i  \cap B_i   \text{ for }i \text{  large enough;}  \]
			which is a contradiction.

			If \eqref{item:a2} fails, after passing to a subsequence, there would be $t \in T ^6 \backslash T $ with 
			\[ \Phi _i(t) B_i \cap B_i \neq \emptyset \text{ for all }i. \] 
			Then 
			\[    d ( \Phi (t) p , p   ) = \lim _{i \to \infty} d( \Phi _i (t) p_i, p_i  ) \leq 2r    .       \]
			By our choice or $r$, we have $d ( \Phi (t) p,p ) < 2r $, so $t \in S$; a contradiction.
			
			If \eqref{item:a3} fails, after passing to a subsequence,  there would be $h_i \in H_i$ with $h _i B_i \cap B_i \neq \emptyset$, but $h_iB_i \not\subset \Phi _i (T) B_i$ for all $i$. Pick $x_i \in h_i B_i \backslash \Phi _i (T) B_i$. After passing to a subsequence, we can assume the sequence $x_i$ converges to a point $x \in X$. Pick $t \in \mathbb{Z}^N$ with $d(\Phi (t)p , x ) \leq \diam (X/ H)$. Since 
			\[    d (\Phi (t) p,p) \leq d(\Phi (t) p,x) + d(x,p) \leq \diam (X/ H) + 3r < 4r ,               \]
			we have $t \in T$. Since $d(\Phi_i (t)p_i , x_i ) \to d(\Phi (t) p, x) $, we have 
			\[ x_i \in B_r (\Phi_i (t) p_i) = \Phi _i (t) B_i \subset \Phi_i (T) B_i  \]
			for $i$ large enough; a contradiction.

			If  \eqref{item:a5}  fails, there would be $t \in T^{12}  \backslash \{ 0 \}$ with $\Phi _i (t)  = e \in G_i$ for infinitely many $i$'s. By \eqref{eq:phis-converge}, this would imply $\Phi (t) = e \in H$. However, this contradicts the fact that $\Phi $ is an embedding. 
		\end{proof}
		
		By \eqref{item:a5}, for $i$ large we can identify $T^6$ with a subset of $H_i$, so by Lemma \ref{lem:items},  Theorem \ref{thm:monodromy} applies to the tuple $(X_i, H_i, B_i, S, T)$. Since $T^3$ is defining in $\mathbb{Z}^n$,  the group $\tilde{\Gamma}$ coincides with $\mathbb{Z}^N$ and the natural map $\tilde{\Gamma} \to H_i$ coincides with $\Phi_i$.  Let $\Psi _i : \tilde{X}_i \to X_i$ be the covering map given by Theorem \ref{thm:monodromy}. Since $\Phi _i $ has infinite kernel, $\Psi _i$ is not a homeomorphism. This contradicts the simple-connectedness of $X_i$.    \end{proof}

    \begin{Rem}
        In the above proof, the hypothesis of the spaces $X_i$ being simply-connected was not used until the very end. Thus, if one runs the arguments of the proof using the spaces and groups from Example \ref{ex:cylinder}, one ends up constructing covering spaces $\tilde{X}_i \to X_i$ with $\tilde{X}_i \cong \mathbb{R}^2$ and equipped with faithful $\mathbb{Z}^2$-actions. 
    \end{Rem}
	
	\begin{proof}[Proof of Theorem \ref{thm:separated-subgroup}] By contradiction, assume there are $(X_i,p_i) \in \mathcal{M}$ and discrete groups $G_i \leq \iso (X_i)$ isomorphic to $\mathbb{Z}^n$ with $\diam (X_i / G_i) \leq D$, but for any finite-index subgroup $\Gamma _ i \leq G_i$ with 
		\[      d ( g p_ i, p_i  ) \geq D \text{ for all } g \in \Gamma _i \backslash \{ e \},                 \]
		one has 
		\begin{equation}\label{eq:bad-diameter}
			\diam (X_i  /  \Gamma _i  ) \geq i .          
		\end{equation}
		By  Theorem \ref{thm:gromov}, there are elements $ g_{i,1} , \ldots , g_{i,k_i} \in G_i $ with
		\begin{gather*}
			d (  g p _i , p_i ) \geq D \text{ for all } g \in \langle g_{i,1}, \ldots , g_{i, k_i} \rangle \backslash \{ e \} , \\
			d(  g_{i,j} p_i , p_i   ) \leq 2D \text{ for all } j \in   \{ 1, \ldots , k_i \},               
		\end{gather*}
		and such that  
		\[
		\{ g_{i,1}, \ldots , g_{i,k_i} \}   \text{ is a basis of }G_i^{ab} \otimes \mathbb{R} \cong \mathbb{Z}^n \otimes \mathbb{R} = \mathbb{R}^n.    
		\]
		Consequently, $k_i = n$ for all $i$, and the map $\varphi_i : \mathbb{Z}^n \to \Gamma _ i : =   \langle g_{i,1}, \ldots , g_{i,n } \rangle$ given by 
		\[  \varphi_i (e_j) : = g_{i,j}\text{ for each }j \in \{ 1, \ldots , n \} \] 
		is an isomorphism. Since $\mathcal{M}$ is pre-compact, after passing to a subsequence, we can assume the spaces $(X_i,p_i)$ converge in the pointed Gromov--Hausdorff sense to a space $(X,p)$.  Then \eqref{eq:bad-diameter} contradicts Theorem \ref{thm:main-technical}.
	\end{proof}

	\section{Torus covers} \label{sec:ricci}
	An important feature of closed aspherical Riemannian manifolds is that their universal covers are always non-collapsed \cite[Corollary 3]{guth}.

	\begin{Thm}[Guth]\label{thm:guth}
		Let $M $ be a closed $n$-dimensional Riemannian manifold whose universal cover $\tilde{M}$ is contractible, and equip $\tilde{M}$ with the lifted metric.  Then for each $r > 0 $  there is $p \in \tilde{M}$ with 
		\[         \vol (B_r (p))  \geq \varepsilon (n) r^n .                       \]
	\end{Thm}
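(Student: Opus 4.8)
The plan is to reduce Theorem~\ref{thm:guth} to the main volume--width estimate of \cite{guth}; in fact the statement \emph{is} \cite[Corollary 3]{guth}, so in the write-up one simply cites it, but here is how the reduction goes, with essentially all of the difficulty concentrated in the cited estimate. First I would rescale: replacing $g$ by $g/r^2$ multiplies the volume of every $R$-ball by $r^{-n}$, turns $r$-balls into unit balls, and leaves the topology of $\tilde M$ unchanged; hence it suffices to find, for every Riemannian metric on $M$, a unit ball in $\tilde M$ of volume at least $\varepsilon(n)$. Assume this fails, so $\vol(B_1(q))<\varepsilon(n)$ for all $q\in\tilde M$, where $\varepsilon(n)$ is the constant from Guth's estimate.

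By the main theorem of \cite{guth}, there is $\varepsilon(n)>0$ such that any complete Riemannian $n$-manifold all of whose unit balls have volume $<\varepsilon(n)$ admits a continuous map onto a simplicial complex of dimension $\le n-1$ with every point-preimage of diameter $\le 1$ (that is, its Uryson $1$-width is at most $n-1$). Since $\tilde M$ is complete, this produces such a map $\Phi:\tilde M\to K$, and it remains to contradict the asphericity of $M$.

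As a closed aspherical $n$-manifold, $M$ is a $K(\pi_1M,1)$ and is therefore \emph{essential}: the classifying map $M\to B\pi_1M$, here a homotopy equivalence, sends $[M]$ to a nonzero class in $H_n(\pi_1M;\mathbb Z/2)$. But $\pi_1M$ acts freely and cocompactly on $\tilde M$ and the fibers of $\Phi$ are uniformly bounded, so a standard manipulation --- equivariantize $\Phi$ over the cocompact action, replace $K$ by a free simplicial model of the same dimension, and pass to quotients --- factors the classifying map of $M$ through a complex of dimension $\le n-1$, which forces the image of $[M]$ in $H_n$ to vanish. This contradiction settles the unit-ball case, and undoing the rescaling gives the theorem for all $r$.

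The only non-elementary ingredient, and the expected main obstacle, is the cited volume-to-width inequality of \cite{guth}, whose proof is a delicate Federer--Fleming-type ``bend-and-cancel'' deformation argument; the rescaling, the application to the complete manifold $\tilde M$, the fact that closed aspherical manifolds are essential, and the equivariantization of a bounded-fiber map are all routine. Accordingly, in the paper I would simply invoke \cite[Corollary 3]{guth}.
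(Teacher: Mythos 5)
The paper offers no proof of Theorem \ref{thm:guth}: it is quoted verbatim as \cite[Corollary 3]{guth} and used as a black box, so your decision to simply cite Guth is exactly what the paper does, and your rescaling reduction to the unit-radius case is the same normalization used there. The sketch you append of what lies behind the citation, however, misstates Guth's theorem. The main result of \cite{guth} is a \emph{filling radius} estimate --- if every unit ball of a complete Riemannian $n$-manifold has volume less than $\delta(n)$, then the filling radius is at most $1$ --- and Corollary 3 is deduced by combining this with Gromov's theorem that the universal cover of a closed aspherical manifold has infinite filling radius. The Uryson-width statement you invoke (small unit balls force a continuous map onto an $(n-1)$-dimensional complex with point-preimages of diameter at most $1$; note your parenthetical inverts the roles of $n-1$ and $1$) was only \emph{conjectured} in \cite{guth} and was proved later by Liokumovich--Lishak--Nabutovsky--Rotman and by Papasoglu, so attributing it to \cite{guth} is a citation error, although the statement is true and does imply the corollary. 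Moreover, your proposed endgame --- ``equivariantize $\Phi$'' and factor the classifying map of $M$ through an $(n-1)$-complex --- is not as routine as you claim: $\Phi$ is not equivariant, $K$ carries no action of $\pi_1(M)$, and producing an equivariant bounded-fiber model requires genuine work; the standard deduction instead converts the bounded-fiber (or small filling radius) property of $\tilde M$ directly into a contradiction with Gromov's infinite filling radius for aspherical universal covers, which is where the essentialness of $M$ enters. None of this affects the paper, whose proof of Theorem \ref{thm:torus} uses only the quoted statement.
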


	\begin{proof}[Proof of Theorem \ref{thm:torus}] 
	Without loss of generality, we can assume $D \geq 2$.	Let $\mathcal{M}$ be the class of pointed complete simply-connected $n$-dimensional Riemannian manifolds with Ricci curvature $\geq - (n-1)$. It is well known that $\mathcal{M}$ is pre-compact in the pointed Gromov--Hausdorff sense \cite[Section 6]{gromov-polynomial}.  Let $\tilde{M}$ be the universal cover of $M$ equipped with the lifted metric. By Theorem \ref{thm:guth} there is $p \in \tilde{M}$ with 
		\[   \vol(  B_{ 1 }(p)) \geq    \varepsilon (n) .                 \]
		By Theorem \ref{thm:separated-subgroup}, there is a subgroup $\Gamma \leq \pi _1 (M)$ such that 
		\[
		d ( g p , p ) \geq D \text{ for all } g \in \Gamma \backslash \{ e \}  ,
		\]
		and 
        \[ \diam (\tilde{M} / \Gamma ) \leq  D' (n, D ). \] 
        If $M ' : = \tilde{M} / \Gamma $, then $B_{D/2} (p)\subset \tilde{M}$ is sent injectively and locally isometrically to $M'$, so 
		\[ \vol (M') \geq \varepsilon .  \]
	\end{proof}

	\section*{Acknowledgements}
	
       The author is grateful to Cameron Rudd for pointing out the error  in \cite[Lemmas 2.3 and 2.4]{kloeckner-sabourau}  and its connection with Theorem \ref{thm:bns}. The author would also like to thank Christine Escher and Ananda L\'opez-Poo for helpful comments and suggestions, and an anonymous referee for carefully reading the paper and suggesting the inclusion of Example \ref{ex:cylinder}.

\end{document}